\documentclass{amsart}
\usepackage[utf8]{inputenc}

\title[A multi-point maximum principle for Harnack inequalities]{A multi-point maximum principle to prove the global Harnack inequality for the doubly nonlinear heat equation}
\author{Ben Andrews, Daniel Hauer and Jessica Slegers}
\date{\today}

\address{Ben Andrews, Mathematical Sciences Institute, Australian National University, Canberra, ACT, 2601, Australia }
\email{Ben.Andrews@anu.edu.au}
\address{Daniel Hauer, Brandenburg University of Technology Cottbus–Senftenberg, Platz der Deutschen Einheit 1, 03046 Cottbus, Germany; School of Mathematics and Statistics, The University of Sydney, Sydney, NSW, 2006, Australia }
\email{daniel.hauer@b-tu.de}
\address{
    Jessica Slegers, School of Mathematics and Statistics, The University of Sydney, Sydney, NSW, 2006, Australia; Brandenburg University of Technology Cottbus–\linebreak Senftenberg, Platz der Deutschen Einheit 1, 03046 Cottbus, Germany}
\email{jessica.slegers@sydney.edu.au}

\usepackage{amsmath,amssymb}
\usepackage{bbm}
\usepackage{mathtools}
\usepackage[colorlinks,citecolor=blue,urlcolor=blue]{hyperref}
\usepackage{enumitem}
\setenumerate{label=(\roman*)}

\usepackage{amsthm}

\newtheoremstyle{theorem}{}{}{\itshape}{}{\bfseries}{.}{ }{}
\theoremstyle{theorem}
\newtheorem{theorem}{Theorem}[section]

\newtheorem{lemma}[theorem]{Lemma}
\newtheorem{corollary}[theorem]{Corollary}

\newtheoremstyle{definition}{}{}{}{}{\bfseries}{.}{ }{}
\theoremstyle{definition}
\newtheorem{definition}{Definition}[section]
\newtheorem{remark}{Remark}[section]

\numberwithin{equation}{section} 

\newcommand{\R}{\mathbb{R}}

\newcommand{\dd}{\ \mathrm{d}}
\DeclareMathOperator{\divergence}{div}
\DeclareMathOperator{\trace}{tr}

\DeclareMathOperator{\sym}{Sym}

\DeclareMathOperator{\USC}{USC}
\DeclareMathOperator{\LSC}{LSC}

\thanks{}

\subjclass[2020]{35K10, 35K55, 35K65, 35K92}

\keywords{Harnack inequality, heat equation, maximum principle, multi-point maximum principle, doubly nonlinear heat equation, viscosity solutions}
 
\begin{document}

\begin{abstract}
This work aims to present a new proof of the global pointwise Harnack inequality satisfied by positive viscosity solutions of the doubly nonlinear heat equation on $\R^d$. Our approach is based on a multi-point maximum principle argument introduced by the authors in a recent article [Calc. Var., 2026]. Required for the proof is a new variation of the maximum principle for semicontinuous functions, which is a standard tool in the analysis of viscosity solutions. In particular, our techniques provide a new proof of the Harnack inequalities satisfied by positive solutions of the porous medium equation and the $p$-heat equation, which were derived by Auchmuty and Bao [Proc. Amer. Math. Soc., 1994] using results of Aronson and B\'enilan [C. R. Acad. Sci. Paris, 1979] and Esteban and V\'azquez [Arch. Rational Mech. Anal., 1988]. The results contained in this article have also been discussed in the third author's doctoral thesis.
\end{abstract}

\maketitle

\section{Introduction and main results}

This article is a continuation of a recent work by the authors \cite{paper1}, which introduced a new methodology based on a multi-point maximum principle argument to prove global Harnack inequalities satisfied by positive solutions of parabolic equations. One advantage of the method employed in \cite{paper1} is that the solutions were only assumed to have $C^{2,1}$ regularity, compared with previous works, which typically require solutions to be at least $C^4$ in the spatial variable (see for example, \cite{Aronson-Benilan-1979,Esteban-Vasquez-1990, LiYau1986}). Since our methodology can be performed even under low regularity assumptions, it is well-suited to establishing global Harnack inequalities for viscosity solutions of nonlinear evolution problems, which do not possess any differentiability properties in general. In this article, we demonstrate this by using the doubly nonlinear heat equation (also called Leibenson's equation \cite{Leibenson}) as a prototypical example. Since solutions of this equation do not enjoy sufficient regularity to be treated as classical solutions \cite{boegelein2023holdercontinuitygradientsolutions, dibenedetto-friedman-pheat-regularity-addendum,dibenedetto-friedman-pheat-regularity, strunk-dne}, it is appropriate to analyse solutions of this equation via a weaker notion, such as viscosity solutions. We emphasise, however, that our method can also handle more complicated nonlinear problems. 

For $m>0$, $p \geq 2$, we consider the equation 
\begin{equation}\label{eq:dne-manifold}
    \partial_t u = \Delta_{p} (u^m) \qquad \text{in $\Omega \times (0,\infty)$},
\end{equation}
on either a bounded, convex domain $\Omega \subseteq \R^d$ or the full Euclidean space $\R^d$, where \[
\Delta_p u:= \divergence(|\nabla u|^{p-2}\nabla u)
\]
is the \emph{$p$-Laplace operator}. Setting $p=2$ in equation \eqref{eq:dne-manifold} corresponds to the porous medium equation while choosing $m=1$ yields the $p$-heat equation. Although we restrict the parameter $p$ to the range $p\geq 2$, our methodology can also be applied to study the case $p<2$, for which the operator $\Delta_p$ becomes singular. However, the notion of viscosity solutions of \eqref{eq:dne-manifold} becomes more delicate to define in this singular range. Therefore, we choose to restrict ourselves to the case $p \geq 2$ and handle the case $p<2$ in a forthcoming paper.

The problem of studying the Harnack inequality satisfied by solutions of quasilinear differential equations such as \eqref{eq:dne-manifold} has been well-studied since the 1960s. Although fast progress was made in the elliptic case using the methods of Moser and De Giorgi \cite{Ladyzhenskaya-elliptic, serrin1964, trudinger-1967}, the parabolic case proved much more difficult to understand. In the latter half of the 1980s, DiBenedetto introduced a new metholodogy, referred to as the \emph{method of intrinsic scaling} (see \cite{DiBenedetto-Harnack-Book} or \cite{Urbano-intrinsic}), in order to study the Hölder regularity properties of quasilinear parabolic equations of $p$-Laplace type \cite{Chen-DiBenedetto, DiBenedetto-1986}. This approach also enabled the discovery of local Harnack inequalities for the $p$-heat equation in both the cases $p\geq 2$ \cite{DiBenedetto-1988-p-pme}, as well as for $\frac{2d}{d+1} < p < 2$ together with Kwong \cite{dibenedetto-kwong}. Interestingly, DiBenedetto and Kwong showed in these articles, that these techniques could also be applied to prove Harnack inequalities for positive solutions of the porous medium equation for $m> \frac{(d-2)_+}{d}$. Results for the doubly nonlinear heat equation \eqref{eq:dne-manifold} were also achieved by Gianazza and Vespri \cite{gianazza-vespri, vespri}, which were significant since the earlier methods of Moser had only yielded results in the specific case $m=\frac{1}{p-1}$ \cite{kinnunen-kuusi, kuusi, trudinger-1968}. Harnack inequalities for general quasilinear parabolic equations were found much later by DiBenedetto, Gianazza, and Vespri \cite{DGV-paper, DGVpless2}, again subject to the condition that $p>p_{*}:= \frac{2d}{d+1}$. The work of these three authors revealed several interesting phenomena, and we refer to their book \cite{DiBenedetto-Harnack-Book} for a detailed, self-contained account of their findings.

We note that prior to the work of DiBenedetto, the differential Harnack inequality for the porous medium equation had already been proven by Aronson and B\'enilan \cite{Aronson-Benilan-1979}, and similar estimates were later found for the $p$-heat equation and doubly nonlinear heat equation by Esteban and V\'azquez \cite{Esteban-Vazquez-1988, Esteban-Vasquez-1990}. These inequalities are analogous to the famous Li-Yau differential Harnack inequality for the linear heat equation \cite{LiYau1986} and were derived using similar techniques based on a comparison principle. However, the space-time integration of these differential Harnack inequalities to obtain pointwise global Harnack inequalities was not explicitly performed until Auchmuty and Bao's article \cite{Auchmuty-Bao-1994}, which credits Li and Yau's article \cite{LiYau1986} as a main inspiration for the integration step. A differential Harnack inequality was also made available by Crandall and Pierre \cite[Lemma 2]{Crandall-Pierre} for the generalised porous medium equation $\partial_t u = \Delta (\varphi(u))$, where $\varphi:\R \to \R$ is a non-decreasing function. 

Harnack inequalities for nonlinear heat equations have also been studied extensively in the setting of Riemannian manifolds, with several published works being concerned in particular with equations of porous medium type \cite{Bailesteanu, cao-zhu, Huang-Huang-Li, Lu-Ni-Vazquez-Villani, Ma-Li,  Ma-Zhao-Song, Qiu, Vazquez-PME-book, Wang-2017, wang-xie-zhou, Yau-1994}, as well as the $p$-heat equation \cite{Kotschwar-Ni} and the doubly nonlinear heat equation \cite{azami, chen-xiong-dne, Wang-Chen-doubly-nonlinear, wang-wang}. The articles mentioned so far typically consider classical or weak solutions, although some further works also exist, which treat viscosity solutions of fully nonlinear equations directly \cite{caffarelli, gripenberg, kim-lee, Kurkinen-Siltakoski, parviainen}. 

The main results of this article can already be found in the third author's doctoral thesis \cite{my-phd}. During the preparation of this article, our multi-point maximum principle technique was further adapted by Jeffres and Li \cite{jeffres-li} to treat viscosity solutions of parabolic equations associated to various other nonlinear operators, including the normalised $p$-Laplace operator and weighted $p$-Laplace operator. Their results also apply the methodology to the settings of Riemannian manifolds and closed smooth metric measure spaces. What remains new in our article is the treatment of the doubly nonlinear heat equation in Euclidean spaces, although the proof of Theorem 1.6 in \cite{jeffres-li} can be easily adapted to extend our results to these more general settings.

\begin{theorem}\label{thm:dne-viscosity}
Let $m>0$, $p\geq 2$ and define $\lambda:= m - \frac{1}{p-1}$. Set $S:= \{(t,s) \in \R^2 \mid 0 < s < t \}$ and let $\Omega \subset \R^d$ be a bounded, convex domain with smooth boundary $\partial \Omega$. Suppose $\omega: \overline{\Omega} \times \overline{\Omega} \times S \rightarrow \R$ is a continuous function, which is twice differentiable in the first and second arguments and differentiable in the third and fourth arguments. Suppose that $\omega(x,y;t,s) \rightarrow \infty$ whenever $t \rightarrow s^+$ for $x\ne y$ and $\lim_{t\to s^+} \omega(x,x;t,s) \geq 0$ for all $x \in \overline{\Omega}$. In addition, suppose there exists a strictly increasing, differentiable function $\beta:[0,\infty) \to [0,\infty)$ with $\beta(0)=0$ and a function $A:[0,\infty) \to [0,\infty)$ such that $A^2\beta^{\lambda}$ is increasing and $\omega$ satisfies
\begin{align}
\partial_{t} \omega + \beta^{-\lambda(p-1)}(t)|\nabla_{x}\omega|^{p} &\geq 0\qquad \text{ in $\overline{\Omega} \times \overline{\Omega} \times S$},\label{eq:dne-euclidean-omega-ineq-1-visc} \\
\partial_{s} \omega - \beta^{-\lambda(p-1)}(s)|\nabla_{y}\omega|^{p} &\geq 0\qquad \text{ in $\overline{\Omega} \times \overline{\Omega} \times S$},\label{eq:dne-euclidean-omega-ineq-2-visc}
\end{align} 
\begin{equation}\label{eq:dne-euclidean-omega-ineq-3-visc}
\begin{split}
    A^{2}(t) \beta^{-\lambda(p-2)}(t)\Delta_{p,x}\omega
+ A^{2}(s) \beta^{-\lambda(p-2)}(s)\Delta_{p,y}\omega& 
\\+ 2A(t)A(s)\beta^{-\frac{\lambda(p-2)}{2}}(t)\beta^{-\frac{\lambda(p-2)}{2}}(s)\Delta_{p,xy} \omega& \\ \leq A^{2}(t)\beta^{\lambda-1}(t) \beta'(t)-& A^{2}(s) \beta^{\lambda-1}(s) \beta'(s)
\end{split}
\end{equation}
in $\overline{\Omega} \times \overline{\Omega} \times S$, where \[
\begin{split}
    \Delta_{p,xy} \omega:= |\nabla_{x}\omega|&^{\frac{p-4}{2}}|\nabla_{y}\omega|^{\frac{p-4}{2}}\times \\&\left(|\nabla_{x}\omega||\nabla_{y}\omega|\sum_{i=1}^d \frac{\partial^2 \omega}{\partial x_i\partial y_i} - (p-2)\sum_{i,j=1}^d \frac{\partial \omega}{\partial x_i} \frac{\partial^2 \omega}{\partial x_i\partial y_j} \frac{\partial \omega}{\partial y_j}\right).
\end{split}
\] Finally, assume that 
\begin{equation}\label{eq:V-convexity-compact-dne-visc}
    \begin{split}
    \nabla_x \omega \cdot \nu(x) \geq 0, \qquad \text{for all $x \in \partial \Omega$, $y \in \overline{\Omega}$, and $0<s<t$,}\\
    \nabla_y \omega \cdot \nu(y) \geq 0, \qquad \text{for all $x \in \overline{\Omega}$, $y \in \partial \Omega$, and $0<s<t$.}
\end{split}
\end{equation}
Then every positive viscosity solution $u$ of
\begin{equation}\label{eq:dne-euclidean-neumann-visc}
\begin{cases}
        \partial_t u = \Delta_{p} (u^m)  &\text{in $\Omega \times (0,\infty)$} \\
        |\nabla (u^m)|^{p-2}\nabla (u^m) \cdot \nu = 0 \quad &\text{on $\partial \Omega \times (0,\infty)$}
    \end{cases}
\end{equation}
satisfies the Harnack inequality
\begin{equation}\label{eq:harnack-dne-lam>0-visc}
    \psi(\beta(t)u(x,t)) \geq \psi(\beta(s)u(y,s)) - \omega(x,y;t,s)
\end{equation}
for all $x,y \in \overline{\Omega}$ and $0<s<t$, where 
\[
\psi(u):= \begin{cases}
   \dfrac{m}{\lambda}u^{\lambda} & \text{if $\lambda \ne 0$,}\\[12pt] 
    \dfrac{1}{p-1}\log u & \text{if $\lambda=0$.}
\end{cases}
\]
\end{theorem}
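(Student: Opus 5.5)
We transform the equation into one for a pressure-type variable and then run a two-point maximum principle in the viscosity framework. Set $v(x,t):=\psi(\beta(t)u(x,t))$. For smooth $u$, a direct computation using $\partial_t u=\Delta_p(u^m)$ and $\nabla(u^m)=m u^{m-1}\nabla u$ gives
\[
\partial_t v=\beta^{-\lambda(p-2)}(t)\Big(c_1 v\,\Delta_p v+|\nabla v|^p\beta^{-\lambda}(t)\cdot(\cdots)\Big)+\beta^{\lambda-1}\beta' (\beta u)^{0}\cdots .
\]
Schematically, $v$ satisfies $\partial_t v=a(t,v)\Delta_p v+\beta^{-\lambda(p-1)}(t)|\nabla v|^p+\beta^{\lambda-1}\beta'u^\lambda\cdot(\text{const})$. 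The exponents of $\beta$ in the hypotheses (for example $\beta^{-\lambda(p-1)}$ in front of $|\nabla\omega|^p$) are exactly the ones produced by this substitution. I would first record this equation as a statement about viscosity sub- and supersolutions: $u$ is a viscosity solution if and only if $v$ is one of the transformed equation. This holds because $\psi(\beta\,\cdot)$ is smooth and strictly increasing on $(0,\infty)$, so test functions transform accordingly.

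Next, define
\[
Z(x,y,t,s):=v(y,s)-v(x,t)-\omega(x,y;t,s)
\]
on $\overline\Omega\times\overline\Omega\times\{s_0\le s<t\le T\}$. The goal is to show $Z\le 0$. Suppose instead that $\sup Z>0$. Replace $\omega$ by $\omega+\varepsilon(t-s_0)+\varepsilon/(s-s_0)$, or by a similar small penalization, so that the maximum is attained at an interior time point, where the time derivative gives a strict inequality. The blow-up of $\omega$ as $t\to s^+$ for $x\ne y$, together with $\lim_{t\to s^+}\omega(x,x;t,s)\ge 0$ and the continuity of $u$, rules out a maximum on the diagonal $t=s$. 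The Neumann hypothesis \eqref{eq:V-convexity-compact-dne-visc}, combined with the boundary condition in the viscosity sense and the convexity of $\Omega$, rules out boundary maxima or lets us treat them as interior ones.

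At a positive interior maximum we apply the parabolic maximum principle for semicontinuous functions (the Crandall–Ishii lemma) in the two-point variables. This produces jets $(a,\xi,X)\in\overline{\mathcal P}^{2,-}v(x,t)$ and $(b,\eta,Y)\in\overline{\mathcal P}^{2,+}v(y,s)$ with $\xi=-\nabla_x\omega$, $\eta=\nabla_y\omega$, $b-a=\partial_t\omega+\partial_s\omega$ (up to sign conventions), and the matrix inequality
\[
\begin{pmatrix}-X&0\\0&Y\end{pmatrix}\le D^2_{(x,y)}\omega+\delta(D^2\omega)^2 .
\]

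The main obstacle is this step. We need the variant of the semicontinuous maximum principle mentioned in the abstract, because the $p$-Laplacian is degenerate where the gradient vanishes, and because the test matrix must be applied to the specific vector combination giving $\Delta_{p,x}$, $\Delta_{p,y}$ and the mixed term $\Delta_{p,xy}$. I would handle it by testing the matrix inequality against the block $p$-Laplacian matrices. With $M_x=|\xi|^{p-2}(I+(p-2)\hat\xi\otimes\hat\xi)$ and $M_y$ defined analogously, use the vector pair $(A(t)\beta^{-\lambda(p-2)/2}(t)M_x^{1/2}e_i,\;A(s)\beta^{-\lambda(p-2)/2}(s)M_y^{1/2}e_i)$ and sum over $i$. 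This produces precisely the combination on the left-hand side of \eqref{eq:dne-euclidean-omega-ineq-3-visc}, with an error of order $\delta$ that we let go to $0$. The case of a vanishing gradient is covered by the convention $p\ge2$, under which the operator is continuous.

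Finally, insert the viscosity inequalities for $v$ at $(x,t)$ and $(y,s)$, weighted by $A^2(t)$ and $A^2(s)$. Using $Z>0$, we have $v(y,s)>v(x,t)$. Since $A^2\beta^\lambda$ is increasing, the zero-order terms $\beta^{\lambda-1}\beta'u^\lambda$ and the coefficients $a(t,v)$ can be compared in the correct direction. Then \eqref{eq:dne-euclidean-omega-ineq-1-visc} and \eqref{eq:dne-euclidean-omega-ineq-2-visc} absorb the gradient terms, and \eqref{eq:dne-euclidean-omega-ineq-3-visc} absorbs the second-order terms, which contradicts the strict inequality introduced by the $\varepsilon$-penalization. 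Letting $\varepsilon\to0$ and $s_0\to0$ gives \eqref{eq:harnack-dne-lam>0-visc}. The case $\lambda=0$ uses the same argument with the logarithm.
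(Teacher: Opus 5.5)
Your handling of the lower time boundary does not work. The term $\varepsilon/(s-s_0)$ keeps the maximum off $s=s_0$, but it enters the subsolution inequality at $(y,s)$ with the wrong sign. The superjet there has time component $\partial_s\omega-\varepsilon/(s-s_0)^2$, so after using \eqref{eq:dne-euclidean-omega-ineq-2-visc} and dividing by $m\,u(y,s)^{\lambda}$ you only get
\[
\beta^{-\lambda(p-2)}(s)\,\mathcal{L}_y+\beta^{\lambda-1}(s)\beta'(s)\ \geq\ -\frac{\varepsilon}{(s-s_0)^2\,m\,u(y,s)^{\lambda}},\qquad \mathcal{L}_y:=|\eta|^{p-4}\big(|\eta|^2\trace Y+(p-2)\eta^\top Y\eta\big).
\]
The weighted combination then ends in $0\le -\varepsilon A^2(t)/(m\,u(x,t)^{\lambda})+\varepsilon A^2(s)/((s-s_0)^2\,m\,u(y,s)^{\lambda})+O(\kappa)$, where $\kappa$ is the parameter in the matrix inequality. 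This is no contradiction.

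This is structural, not a matter of picking a better penalty. Anything that penalizes $s$ near $s_0$ must decrease in $s$ there, which is the wrong direction for \eqref{eq:dne-euclidean-omega-ineq-2-visc}. For a forward parabolic problem the initial slice must be controlled by initial information, and your argument has none at $s=s_0$; letting $s_0\to0$ afterwards does not repair this. In the paper that information comes from $\beta(0)=0$, a hypothesis you never use. One works on all of $\{0<s<t\}$ with $Z=\psi(\beta(t)u(x,t))-\psi(\beta(s)u(y,s))+\omega+\frac{\varepsilon}{2}(t-s)^2+\varepsilon$. One observes that $Z>0$ for $s$ small (via $\beta(0)=0$) and for $t$ close to $s$ (via the behaviour of $\omega$ and the $+\varepsilon$), and argues at a first touching point $P_0$ in the sense of Definition~\ref{def:first-touching-point}. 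Since $Z>Z(P_0)$ whenever $|(t,s)|<|(t_0,s_0)|$, $P_0$ minimizes $Z$ over $s_0<t\le t_0$, $0<s\le s_0$. That is exactly what the one-sided parabolic jets in Corollary~\ref{cor:para-max-semi-min} require.

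The step you single out as the main obstacle also does not give what you claim. Set $c_t:=A(t)\beta^{-\lambda(p-2)/2}(t)$ and $c_s:=A(s)\beta^{-\lambda(p-2)/2}(s)$. Your vectors $(c_tM_x^{1/2}e_i,\,c_sM_y^{1/2}e_i)$ pair the matrix inequality with a positive semidefinite matrix whose off-diagonal block is $c_tc_s\,M_x^{1/2}M_y^{1/2}$. Your mixed term is therefore $2c_tc_s\trace(M_x^{1/2}D^2_{xy}\omega\,M_y^{1/2})$, and for $p>2$ this is not $2c_tc_s\Delta_{p,xy}\omega$. For instance, let $\nabla_x\omega\perp\nabla_y\omega$ and $D^2_{xy}\omega=\hat a\hat b^\top$, with $\hat a,\hat b$ the unit vectors along $\nabla_x\omega,\nabla_y\omega$. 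Then your term vanishes, while $\Delta_{p,xy}\omega=-(p-2)|\nabla_x\omega|^{\frac{p-2}{2}}|\nabla_y\omega|^{\frac{p-2}{2}}$, so \eqref{eq:dne-euclidean-omega-ineq-3-visc} does not bound the quantity you need.

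Only the diagonal blocks $c_t^2M_x$, $c_s^2M_y$ are forced; the coupling has to be chosen to match the hypothesis. The right choice is the vectors $(c_t|\xi|^{\frac{p-2}{2}}e_i,\,c_s|\eta|^{\frac{p-2}{2}}e_i)$, $i=1,\dots,d$, together with $\sqrt{p-2}\,(c_t|\xi|^{\frac{p-2}{2}}\hat\xi,\,c_s|\eta|^{\frac{p-2}{2}}\hat\eta)$. This is the paper's $\tilde F+(p-2)\zeta\zeta^\top$, with $\zeta$ built from the jet gradients $\xi=-\nabla_x\omega$, $\eta=\nabla_y\omega$. Its off-diagonal block $c_tc_s|\xi|^{\frac{p-2}{2}}|\eta|^{\frac{p-2}{2}}(I+(p-2)\hat\xi\hat\eta^\top)$ reproduces $\Delta_{p,xy}\omega$ exactly.

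There are two smaller points.
\begin{itemize}
\item The coefficients $a(t,v)$ at the two points cannot be ``compared'', since the second-order expressions have no sign. Instead, divide each viscosity inequality by $m\,u^{\lambda}$ at its own point, which puts it exactly in the form of \eqref{eq:dne-euclidean-omega-ineq-3-visc}. In the paper, the monotonicity of $A^2\beta^{\lambda}$ is used only to sign the leftover $\varepsilon$-term coming from the $(t-s)^2$ penalty.
\item At boundary points, the non-strict condition \eqref{eq:V-convexity-compact-dne-visc} does not exclude the Neumann branch of the relaxed boundary condition. The paper forces strictness by passing to $v\pm\theta h$ with $h=\frac12|x-z_0|^2$; convexity gives $(x-z_0)\cdot\nu\ge d(z_0,\partial\Omega)>0$. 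It then localizes with quartic terms so that the perturbed minimizers converge to $P_0$.
\end{itemize}
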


As a special case of Theorem \ref{thm:dne-viscosity}, we obtain the following result for the linear heat equation, which extends the result previously obtained in \cite{paper1} to viscosity solutions.

\begin{theorem}\label{thm:Euclidean-Schroedinger-full-visc}
Let $\Omega \subseteq \R^d$ be a bounded, convex domain with smooth boundary $\partial \Omega$ and let $V \in C^2(\Omega)$ be bounded from below.
Let $\omega: \overline{\Omega} \times \overline{\Omega} \times S \rightarrow\R$ be a continuous function, which is twice differentiable in the first and second arguments and differentiable in the third and fourth arguments. Suppose that $\omega(x,y;t,s) \rightarrow \infty$ whenever $t \rightarrow s^+$ for $x\ne y$ and $\lim_{t\to s^+} \omega(x,x;t,s) \geq 0$ for all $x \in \overline{\Omega}$. In addition, assume that $\omega$ satisfies
\begin{align}
    \partial_t \omega + |\nabla_x\omega|^2 &\geq V(x) \ \ \qquad \text{ in $\overline{\Omega} \times \overline{\Omega} \times S$}, \label{eq:schroe-omega-ineq-1-visc}\\
    \partial_s \omega - |\nabla_y\omega|^2 &\geq -V(y) \qquad \text{ in $\overline{\Omega} \times \overline{\Omega} \times S$,}\label{eq:schroe-omega-ineq-2-visc}
\end{align}
and there exist strictly increasing, differentiable functions $\beta:[0,\infty) \to [0,\infty)$ satisfying $\beta(0)=0$ and $A:[0,\infty) \to [0,\infty)$ such that
\begin{equation}
    A(t)^2\Delta_x\omega +A(s)^2\Delta_y \omega + 2A(t)A(s) \Delta _{xy} \omega \leq A(t)^2 \frac{\beta'(t)}{\beta(t)}-A(s)^2 \frac{\beta'(s)}{\beta(s)}\label{eq:schroe-omega-ineq-3-visc}
\end{equation}
holds in $\overline{\Omega} \times \overline{\Omega} \times S$.
Finally, assume that 
\begin{equation}\label{eq:V-convexity-compact-visc}
    \begin{split}
    \nabla_x \omega \cdot \nu(x) \geq 0, \qquad \text{for all $x \in \partial \Omega$, $y \in \overline{\Omega}$, and $0<s<t$,}\\
    \nabla_y \omega \cdot \nu(y) \geq 0, \qquad \text{for all $x \in \overline{\Omega}$, $y \in \partial \Omega$, and $0<s<t$.}
\end{split}
\end{equation} Then, every positive viscosity solution $u$ of 
\begin{equation}\tag{\ref{2025-10-21: 1}}
   \begin{cases}
   \begin{alignedat}{2}
       \partial_t u &= \Delta u && \qquad \text{in $\Omega \times (0,T)$,} \\
        \nabla u \cdot \nu &= 0 && \qquad \text{on $\partial \Omega \times (0,T)$}
   \end{alignedat}
\end{cases} 
\end{equation} satisfies
\begin{equation}\label{eq:heat-harnack-omega-visc}
u(x,t) \geq u(y,s) \frac{\beta(s)}{\beta(t)} e^{-\omega(x,y;t,s)}
\end{equation}
for all $x,y \in \Omega$ and $0 <s<t$.
\end{theorem}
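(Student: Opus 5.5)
We obtain Theorem \ref{thm:Euclidean-Schroedinger-full-visc} by specialising Theorem \ref{thm:dne-viscosity} to $m=1$ and $p=2$. Here I assume $V\equiv 0$, which is the case consistent with the displayed equation $\partial_t u=\Delta u$. For $m=1$, $p=2$ we have $\lambda = m-\frac{1}{p-1}=0$, so $\psi(u)=\log u$ and all powers $\beta^{-\lambda(p-1)}$, $\beta^{-\lambda(p-2)}$, $\beta^{\lambda}$ equal $1$. The equation \eqref{eq:dne-manifold} becomes $\partial_t u=\Delta u$, and the Neumann condition $|\nabla(u^m)|^{p-2}\nabla(u^m)\cdot\nu=0$ becomes $\nabla u\cdot\nu=0$. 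The two notions of viscosity solution coincide, since the operator is the same.

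The plan is to verify the hypotheses of Theorem \ref{thm:dne-viscosity} one by one.
\begin{enumerate}
\item The regularity, blow-up and initial-limit assumptions on $\omega$ are identical in the two theorems.
\item Conditions \eqref{eq:dne-euclidean-omega-ineq-1-visc}--\eqref{eq:dne-euclidean-omega-ineq-2-visc} reduce to $\partial_t\omega+|\nabla_x\omega|^2\ge 0$ and $\partial_s\omega-|\nabla_y\omega|^2\ge 0$. These are exactly \eqref{eq:schroe-omega-ineq-1-visc}--\eqref{eq:schroe-omega-ineq-2-visc} with $V\equiv0$.
\item For $p=2$ the mixed operator is $\Delta_{2,xy}\omega=\sum_i \partial^2\omega/\partial x_i\partial y_i=\Delta_{xy}\omega$. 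Since $\beta^{\lambda-1}\beta'=\beta'/\beta$, inequality \eqref{eq:dne-euclidean-omega-ineq-3-visc} is literally \eqref{eq:schroe-omega-ineq-3-visc}.
\item The monotonicity requirement that $A^2\beta^\lambda=A^2$ be increasing follows because $A$ is nonnegative and increasing.
\item The boundary conditions \eqref{eq:V-convexity-compact-visc} coincide with \eqref{eq:V-convexity-compact-dne-visc}.
\end{enumerate}
The conclusion \eqref{eq:harnack-dne-lam>0-visc} then reads $\log(\beta(t)u(x,t))\ge\log(\beta(s)u(y,s))-\omega(x,y;t,s)$. Exponentiating gives exactly \eqref{eq:heat-harnack-omega-visc}.

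If a nonzero potential is intended, i.e.\ solutions of $\partial_t u=\Delta u-Vu$, the specialisation no longer applies verbatim. In that case I would rerun the argument of Theorem \ref{thm:dne-viscosity} directly.
\begin{enumerate}
\item Set $v=\log u$. This is a viscosity solution of $\partial_t v=\Delta v+|\nabla v|^2-V$, because the logarithm is a smooth increasing change of variables and so preserves viscosity sub- and supersolutions.
\item Consider
\[
Z(x,y;t,s)=\log\beta(t)+v(x,t)-\log\beta(s)-v(y,s)+\omega(x,y;t,s)
\]
on $\overline\Omega\times\overline\Omega\times S$, and suppose $\inf Z<0$. Add a small penalisation in $t$ to force an interior minimum in time. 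The blow-up of $\omega$ as $t\to s^+$ for $x\neq y$, together with $\lim_{t\to s^+}\omega(x,x;t,s)\ge0$, rules out minima approaching the diagonal $t=s$.
\item Minima with $x$ or $y$ on $\partial\Omega$ are excluded by the Neumann condition (in its viscosity sense) combined with \eqref{eq:V-convexity-compact-visc}.
\item At an interior minimum, apply the maximum principle for semicontinuous functions to obtain parabolic jets $(a,p_x,X)$ for $v(\cdot,t)$ and $(b,p_y,Y)$ for $v(\cdot,s)$. The first-order conditions give $p_x=-\nabla_x\omega$ and $p_y=\nabla_y\omega$. The matrix inequality bounds the block matrix $\mathrm{diag}(X,-Y)$ by the Hessian of $-\omega$, up to the usual error.
\item Test that matrix inequality with the vectors $(A(t)e_i,A(s)e_i)$ and sum over $i$. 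This yields $A(t)^2\operatorname{tr}X-A(s)^2\operatorname{tr}Y\ge-\big(A(t)^2\Delta_x\omega+A(s)^2\Delta_y\omega+2A(t)A(s)\Delta_{xy}\omega\big)$.
\item Insert the viscosity inequalities $a\le\operatorname{tr}X+|p_x|^2-V(x)$ and $b\ge\operatorname{tr}Y+|p_y|^2-V(y)$, together with the time-derivative conditions $\partial_t Z\le 0\le\partial_s Z$. Then \eqref{eq:schroe-omega-ineq-1-visc}--\eqref{eq:schroe-omega-ineq-3-visc} produce a contradiction, with the $V$ terms cancelling against \eqref{eq:schroe-omega-ineq-1-visc}--\eqref{eq:schroe-omega-ineq-2-visc}.
\end{enumerate}

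The main obstacle in this direct route is the weighting step. The time derivatives of $v$ at $t$ and $s$ enter with weights $A(t)^2$ and $A(s)^2$, so one must combine the first-order time conditions with the weights carefully. Concretely, one multiplies the $t$-inequality by $A(t)^2$ and the $s$-inequality by $A(s)^2$, and uses the monotonicity of $A$ to absorb the sign of $\omega$-terms with $A(s)^2\le A(t)^2$. This is where the hypothesis that $A^2\beta^\lambda$ is increasing is needed. In the $V\equiv0$ reading, all of this is already contained in Theorem \ref{thm:dne-viscosity}, and the proof is the direct specialisation above.
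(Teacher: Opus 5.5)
Your main argument is correct, but it takes a different route from the paper's. The paper proves this theorem directly. For $v=\log u$ it takes a first touching point of $Z=v(x,t)-v(y,s)+\log\frac{\beta(t)}{\beta(s)}+\omega+\frac{\varepsilon}{2}(t-s)^2+\varepsilon$ and applies Corollary~\ref{cor:para-max-semi-min}. It then traces the block matrix inequality against the matrix with blocks $A(t_0)^2I_d$, $A(t_0)A(s_0)I_d$, $A(s_0)^2I_d$, which gives the contradiction $0\ge\varepsilon(t_0-s_0)(A(t_0)^2-A(s_0)^2)>0$. Touching points on $\partial\Omega$ are handled by passing to $v\pm\theta h$ with $h=\frac12|x-z_0|^2$, so that the Neumann branch of the viscosity boundary condition fails strictly.

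Your hypothesis check for $m=1$, $p=2$ is right, and it matches the paper's remark that this theorem is a special case of Theorem~\ref{thm:dne-viscosity}. Restricting to $V\equiv0$ is also right, and in fact necessary. The paper's proof drops $V(x_0)$ and $V(y_0)$ when it invokes \eqref{eq:schroe-omega-ineq-1-visc}--\eqref{eq:schroe-omega-ineq-2-visc}, and for $V\equiv-C$ the literal statement is false. For instance, with $d\ge2$, the choices $\omega=\frac{|x-y|^2}{4(t-s)}-C(t-s)$, $\beta(t)=t^{d/2}$, $A(t)=t$ satisfy every hypothesis. Yet $u\equiv1$, $x=y$ would give $1\ge(s/t)^{d/2}e^{C(t-s)}$, which fails for $t\gg s$.

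What the reduction buys is brevity. What it costs is that it contains none of the viscosity analysis. Moreover, in the paper the dependence runs the other way: the $\lambda=0$ case of Theorem~\ref{thm:dne-viscosity} is justified only by ``following the procedure'' of the proof of this theorem. So your argument is complete only relative to an independently written proof of that case. You should also note that Theorem~\ref{thm:dne-viscosity} is stated for solutions on $(0,\infty)$ while here $u$ lives on $(0,T)$. This is harmless, since the argument only uses times up to $t_0$.

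The optional direct route for $\partial_tu=\Delta u-Vu$ would not work as written.
\begin{enumerate}
\item The inequality directions are reversed. A minimum of $Z$ gives a subjet of $v$ at $(x_0,t_0)$ and a superjet at $(y_0,s_0)$. You therefore need the supersolution inequality $a\ge\trace X+|p_x|^2-V(x_0)$ and the subsolution inequality $b\le\trace Y+|p_y|^2-V(y_0)$. These must be combined with $\partial_tZ\le0$ and $\partial_sZ\le0$, which is what minimality over earlier times provides. Together they yield $\trace X+\beta'(t_0)/\beta(t_0)\le0\le\trace Y+\beta'(s_0)/\beta(s_0)$. With the directions you wrote, you get two upper bounds on $a$ and only an upper bound on $\trace Y+\beta'(s_0)/\beta(s_0)$ by a quantity that \eqref{eq:schroe-omega-ineq-2-visc} makes nonnegative. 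Nothing follows from that.
\item Without a perturbation, the weighted computation closes with equality rather than a contradiction. The strict sign must come from a term such as $\frac{\varepsilon}{2}(t-s)^2$, whose contribution is $\varepsilon(t_0-s_0)(A(t_0)^2-A(s_0)^2)$. That is where strict monotonicity of $A$ enters; it is not used to absorb $\omega$-terms.
\item Boundary minima are not excluded by \eqref{eq:V-convexity-compact-visc} alone. The viscosity Neumann condition is an alternative with the PDE, and \eqref{eq:V-convexity-compact-visc} only gives $\xi\cdot\nu\le0$, which does not force the PDE branch. This is why the strict $\theta h$-perturbation is genuinely needed.
\end{enumerate}
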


The strategy to prove Theorem \ref{thm:dne-viscosity} and Theorem \ref{thm:Euclidean-Schroedinger-full-visc} is very similar to the one implemented in \cite{paper1}. Seeking to show that all positive solutions $u$ of \eqref{eq:dne-manifold} satisfy \eqref{eq:harnack-dne-lam>0-visc}, we first observe that after making the transformation $v=\psi (u)$, proving \eqref{eq:harnack-dne-lam>0-visc} is equivalent to showing \[
Z(x,y;t,s):= v(x,t) - v(y,s) +  \omega(x,y;t,s) + \frac{\varepsilon}{2}(t-s)^2 + \varepsilon >0
\]
for all $x,y \in \overline{\Omega}$, $0 < s < t$ and $\varepsilon >0$. We demonstrate that $Z>0$ ``initially'', that is, when $t$ and $s$ are both sufficiently close to zero. Then, we suppose there is a point $P_0 =(x_0,y_0;t_0,s_0)\in \overline{\Omega} \times \overline{\Omega} \times S$ such that the function $Z$ touches zero for the ``first time'' at $P_0$ (see Definition \ref{def:first-touching-point}) and use conditions on the derivatives of $Z$ at $P_0$ to arrive at a contradiction, implying that $Z$ must instead remain positive for all times $0<s<t$.

It was demonstrated in \cite{boegelein-strunk} (see also \cite{JLM-equiv, siltakoski} for the case $m=1$), that all bounded weak solutions of the doubly nonlinear heat equation are viscosity solutions. Therefore, the statements of Theorem \ref{thm:dne-viscosity} and Theorem \ref{thm:Euclidean-Schroedinger-full-visc} remain true if we replace viscosity solutions with weak solutions. We also leverage this fact to obtain the following result in the full-space setting.

\begin{theorem}\label{thm:dne-full-euclidean}
Let $m>0$, $p\geq 2$. Suppose $\omega: \R^d \times \R^d \times S \rightarrow \R$ is a continuous function, which is twice differentiable in the first and second arguments and differentiable in the third and fourth arguments. Suppose that $\omega(x,y;t,s) \rightarrow \infty$ whenever $t \rightarrow s^+$ for $x\ne y$ and $\lim_{t\to s^+} \omega(x,x;t,s) \geq 0$ for all $x \in \R^d$. In addition, suppose there exists a strictly increasing, differentiable function $\beta:[0,\infty) \to [0,\infty)$ with $\beta(0)=0$ and a function $A:[0,\infty) \to [0,\infty)$ such that $A^2\beta^{\lambda}$ is increasing and $\omega$ satisfies \eqref{eq:dne-euclidean-omega-ineq-1-visc}--\eqref{eq:dne-euclidean-omega-ineq-3-visc}
in $\R^d \times \R^d \times S$. Then every positive weak solution $u$ of
    \begin{equation}\label{eq:dne-full-space}
        \partial_t u = \Delta_{p} (u^m) \qquad \text{in $\R^d \times (0,\infty)$}
    \end{equation}
satisfies the Harnack inequality \eqref{eq:harnack-dne-lam>0-visc}.
\end{theorem}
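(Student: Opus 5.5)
The plan is to deduce Theorem \ref{thm:dne-full-euclidean} from Theorem \ref{thm:dne-viscosity} by exhaustion with balls. Fix $x,y\in\R^d$ and $0<s<t$. For $R>\max(|x|,|y|)$ let $B_R$ be the open ball of radius $R$ about the origin; it is bounded, convex and smoothly bounded. On $B_R$ we would like a positive viscosity solution of the Neumann problem \eqref{eq:dne-euclidean-neumann-visc} that approximates $u$. The restriction of $u$ to $B_R$ does not satisfy the Neumann condition, so we cannot apply Theorem \ref{thm:dne-viscosity} to it directly. We also cannot simply reuse $\omega$ on $B_R$, since the boundary conditions \eqref{eq:V-convexity-compact-dne-visc} need not hold for $\omega$ on $\partial B_R$.

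Because of these two problems, I would not approximate $u$ by Neumann solutions. Instead I would localize the multi-point maximum principle argument itself, using the fact from \cite{boegelein-strunk} that weak solutions are viscosity solutions. Set $v=\psi(u)$ and
\[
Z_\varepsilon(x,y;t,s)=v(x,t)-v(y,s)+\omega(x,y;t,s)+\tfrac{\varepsilon}{2}(t-s)^2+\varepsilon+\varepsilon\,\phi_R(x)+\varepsilon\,\phi_R(y),
\]
where $\phi_R$ is a smooth penalization that tends to $+\infty$ at $\partial B_R$, or grows like $|x|^2$. The penalization forces any first touching point of $Z_\varepsilon=0$ to be an interior spatial point. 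That removes the need for both the Neumann condition and \eqref{eq:V-convexity-compact-dne-visc}. At such an interior first touching point $P_0$, the semicontinuous maximum principle from the paper (the "new variation" used to prove Theorem \ref{thm:dne-viscosity}) produces jets for $v$ at $(x_0,t_0)$ and $(y_0,s_0)$. Combining these with the viscosity inequalities for $u$ and with \eqref{eq:dne-euclidean-omega-ineq-1-visc}--\eqref{eq:dne-euclidean-omega-ineq-3-visc} gives a contradiction, exactly as in the proof of Theorem \ref{thm:dne-viscosity}. The only new ingredient is the error created by the derivatives of $\varepsilon\phi_R$, and the extra strict term $\varepsilon$ in the time derivative of $\tfrac{\varepsilon}{2}(t-s)^2$ is meant to absorb it. Letting $\varepsilon\to0$ and then $R\to\infty$ then gives \eqref{eq:harnack-dne-lam>0-visc} at the fixed point.

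For the "initial" step, take $t,s$ close to $0$ with $t\to s^+$. For $x\neq y$ we have $\omega\to\infty$. On the diagonal, $\liminf\omega\ge0$, the remaining constant is $\varepsilon$, and $v(x,t)-v(x,s)\to0$ by continuity of $u$. These limits have to be uniform on the compact set where the penalization is not already large. That should hold because weak solutions are locally Hölder continuous \cite{boegelein2023holdercontinuitygradientsolutions} and positive, so $v$ is locally uniformly continuous.

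I expect the main obstacle to be controlling the penalization error inside the degenerate $p$-Laplacian structure. The gradient of $\omega+\varepsilon\phi_R$ replaces $\nabla_x\omega$ in the nonlinear terms $|\nabla\cdot|^{p}$ and $\Delta_{p}$. The resulting perturbation is small only if $\nabla\omega$ stays locally bounded at $P_0$ and $\phi_R$ is chosen with $|\nabla\phi_R|,|D^2\phi_R|\lesssim 1/R$ on the relevant region. I would first try $\phi_R(x)=\sqrt{1+|x|^2}/R$ with $R$ large, so that the error vanishes as $R\to\infty$ for fixed $\varepsilon$. This needs $Z_\varepsilon$ to be coercive, which may additionally require a growth bound on $u$ at infinity. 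If that fails, I would fall back on the cruder barrier $\phi_R=(R^2-|x|^2)^{-1}$ and estimate its contribution directly.
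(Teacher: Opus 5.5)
Your proposal has a genuine gap at the step you flag as the main obstacle and then leave open: absorbing the penalization.

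Once $\varepsilon\phi_R(x)+\varepsilon\phi_R(y)$ is part of $Z_\varepsilon$, the jets at the touching point $P_0=P_0(\varepsilon,R)$ carry the gradients $\nabla_x\omega+\varepsilon\nabla\phi_R(x_0)$ and $\nabla_y\omega+\varepsilon\nabla\phi_R(y_0)$, and Hessians shifted by $\varepsilon D^2\phi_R$. The hypotheses \eqref{eq:dne-euclidean-omega-ineq-1-visc}--\eqref{eq:dne-euclidean-omega-ineq-3-visc} are stated for the unperturbed gradients. These gradients enter nonlinearly, through $|\cdot|^p$ and through the weights $|\cdot|^{p-2}$ and $|\cdot|^{p-4}\xi\xi^{\top}$ that build $\tilde F$ and $\zeta$. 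So the hypotheses no longer close the argument. The discrepancy is controlled only by $\varepsilon|\nabla\phi_R|$ and $\varepsilon|D^2\phi_R|$, multiplied by quantities depending on $\nabla\omega$, $D^2\omega$ and $v$ at $P_0$.

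On the other side, the only favourable term is not ``$\varepsilon$''; the time derivative of $\frac{\varepsilon}{2}(t-s)^2$ is $\varepsilon(t-s)$. The favourable term is $\frac{\varepsilon}{\lambda}(t_0-s_0)\bigl(\frac{A^2(t_0)}{v(x_0,t_0)}-\frac{A^2(s_0)}{v(y_0,s_0)}\bigr)$. It is positive only because $A^2\beta^\lambda$ is increasing and $Z_\varepsilon(P_0)=0$, and it has no lower bound independent of $P_0$.

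The penalization error is proportional to the penalty strength $\varepsilon$, just like the favourable term. So sending $\varepsilon\to0$ gains nothing, and everything rests on $R\to\infty$. But with $\phi_R=\sqrt{1+|x|^2}/R$, the penalty at $|x|\sim R$ is only of order $\varepsilon$. The touching points can therefore drift to spatial infinity as $R$ grows, for instance with $x_0\approx y_0$, where $\omega$ provides no coercivity. They can also have $t_0-s_0$ small. You would then need bounds on $\nabla\omega$, $D^2\omega$, $v$ and $(t_0-s_0)^{-1}$ that are uniform along these points. Coercivity of $Z_\varepsilon$ also needs a growth bound on $u$. Theorem~\ref{thm:dne-full-euclidean} assumes none of this, so the localized argument cannot be completed in the stated generality. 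The blow-up barrier $(R^2-|x|^2)^{-1}$ only makes the derivative terms worse near $\partial B_R$.

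Separately, $Z_\varepsilon$ must be built from $\psi(\beta u)$, that is, from $\beta^\lambda(t)v(x,t)-\beta^\lambda(s)v(y,s)$ when $\lambda\neq0$. With $v=\psi(u)$ as you wrote it, you would be proving a different inequality, and the hypotheses on $\beta$ and $A^2\beta^\lambda$ would never be used.

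The paper avoids localization entirely. It does not restrict $u$ to a ball, which, as you note, would violate the Neumann condition. Instead it:
\begin{enumerate}
\item solves the homogeneous Neumann problem on $B(0,n)$ with initial datum $u_0|_{B(0,n)}$;
\item uses B\"ogelein--Strunk to see that these weak solutions $u_n$ are viscosity solutions;
\item applies Theorem~\ref{thm:dne-viscosity} on each ball;
\item passes to the limit using $u_n\to u$ locally uniformly, via mild solutions, $L^1$--$L^\infty$ regularisation and H\"older estimates.
\end{enumerate}

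Your second objection is fair. Applying Theorem~\ref{thm:dne-viscosity} on balls needs \eqref{eq:V-convexity-compact-dne-visc}, which the paper's sketch leaves implicit. But it is cheap to secure. Suppose $\omega$ depends on $(x,y)$ through a nondecreasing function of $|x-y|$, as the optimal $\omega$ in the final remark does. Then $\nabla_x\omega\cdot\nu(x)$ is a nonnegative multiple of $(x-y)\cdot\nu(x)$, which is $\geq 0$ for $x\in\partial B$, $y\in\overline{B}$ by convexity; symmetrically, $\nabla_y\omega\cdot\nu(y)$ is a nonnegative multiple of $(y-x)\cdot\nu(y)\geq0$. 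Adding this structure, or \eqref{eq:V-convexity-compact-dne-visc} on balls, as a hypothesis is the natural repair, rather than redoing the maximum principle on $\R^d$.
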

Theorem \ref{thm:dne-full-euclidean} can be proven by applying standard arguments from semigroup theory to approximate a positive weak solution $u$ of \eqref{eq:dne-full-space} by a sequence $(u_n)_{n \geq 1}$ of weak solutions to the homogenenous Neumann problem \eqref{eq:dne-euclidean-neumann-visc} on balls of increasing radius. We give a further explanation of this argument in the appendix. Then, since each of the weak solutions $u_n$ is also a viscosity solution, the Harnack inequality \eqref{eq:harnack-dne-lam>0-visc} holds for all $n \geq 1$ and we may pass to the limit as $n \to \infty$ to verify that $u$ satisfies \eqref{eq:harnack-dne-lam>0-visc} as well. 

Before explaining our proof of the Harnack inequality, we first present in Section \ref{sec:prelim} the necessary definitions and technical tools required in the proof, with an emphasis on a new variation of the maximum principle for semicontinuous functions. We begin Section \ref{sec:harnack-viscosity} with the proof of Theorem \ref{thm:Euclidean-Schroedinger-full-visc} to demonstrate our approach in the simpler linear case. We then prove Theorem \ref{thm:dne-viscosity} for viscosity solutions of the doubly nonlinear heat equation. 

\section{Preliminaries}\label{sec:prelim}

\subsection{Viscosity solutions of parabolic equations}

\begin{definition}[Parabolic semijets of semicontinuous functions]\label{def:par-semijets}
    Let $\Omega \subseteq \R^d$, $I_i \subset \R$ be an interval for $i=1,\ldots,k$, and $S:=I_1 \times \ldots \times  I_k \subseteq \R^k$.
    \begin{enumerate}
        \item For a function $u\in \USC(\Omega \times S)$, $x_0 \in \Omega$, $t_0 \in S$, we define the \emph{parabolic superjet} $\mathcal{P}^{2,+}_{\Omega \times S}u(x_0,t_0)$ as the set of $(a,p,X) \in \R^k \times \R^d \times \sym_{d}$ such that 
        \[
        \begin{aligned}u(x,t)  &\leq u(x_{0},t_0)+ \langle a ,t-t_0 \rangle + \langle p ,x-x_{0}\rangle \\ & \hspace{0.5cm}+ \frac{1}{2} \langle X(x-x_{0}),(x-x_{0})\rangle + o(|t_0 -t|_1 + |x-x_{0}|^{2})\end{aligned}
        \]
        as $x \to x_{0}$ in $\Omega$ and $t_i \to t_{0i} ^{-}$ for $i=1,\ldots,k$.
        \item For a function $u\in \LSC(\Omega \times S)$, $x_0 \in \Omega$, $t_0 \in S$, we define the \emph{parabolic subjet} $\mathcal{P}^{2,-}_{\Omega \times S} u(x_0,t_0)$ as the set of $(a,p,X) \in \R^k \times \R^d \times \sym_{d}$ such that 
        \[
        \begin{aligned}u(x,t)  &\geq u(x_{0},t_0)+ \langle a ,t-t_0 \rangle + \langle p ,x-x_{0}\rangle \\ & \hspace{0.5cm}+ \frac{1}{2} \langle X(x-x_{0}),(x-x_{0})\rangle + o(|t_0 -t|_1 + |x-x_{0}|^{2})\end{aligned}
        \]
        as $x \to x_{0}$ in $\Omega$ and $t_i \to t_{0i} ^{-}$ for $i=1,\ldots,k$.
    \end{enumerate}
\end{definition}  

\begin{remark}
    We immediately mention that our definition of $\mathcal{P}^{2,\pm}u(x_0,t_0)$ does not coincide with the standard definition encountered in literature, which may be found, for example, in \cite{Users-guide}. 
    \begin{enumerate}
        \item We allow for the possibility that the function $u$ may depend on multiple time variables, as is the case for the functions $\omega$ and $Z$ utilised in our proofs of the Harnack inequality.
        \item In the typical definition in literature (for $k=1$), the inequalities are understood in the two-sided limit $t \to t_0$. However, as commented by Juutinen \cite{Juutinen-2001}, one often proceeds in the parabolic setting with the intuition that the behaviour of a solution to a parabolic equation at time $t_0$ is determined only by information about the solution at times before $t_0$ and not after. Therefore, in our definition, we only consider $t_i<t_{0i}$ in the one-sided limits $t_i \to t_{0i}^{-}$.
        \item In Definition \ref{def:par-semijets}, $|\cdot|_{1}$ denotes the $1$-norm and \[
        |t_0-t|_1 = \sum_{i=1}^{k}|t_{0i}-t_i| = \sum_{i=1}^{k}(t_{0i}-t_i),
        \]
        since we always take $t_i<t_{0i}$.
        \item As is the case for $J^{2,\pm}_{\Omega}u(x_0)$, the domain of the function $u:\Omega \times S \to \R$ is often not relevant when considering $\mathcal{P}^{2,\pm}_{\Omega \times S} u(x_0,t_0)$, since the definition is equivalent when one replaces $\Omega \times S$ with any other domain for which $(x_0,t_0)$ is an interior point. In addition, if $I_i = (a,b]$ for some $i$ and we consider the parabolic jets of $u$ at a point $(x_0,t_0)$ such that $t_{0i}=b$, then since we are only concerned with the behaviour of $u$ to the left of $t_{0i}=b$, we have that $\mathcal{P}^{2,\pm}_{\Omega \times S} u(x_0,t_0) = \mathcal{P}^{2,\pm}_{\Omega \times S'} u(x_0,t_0)$, where $S'$ is constructed by replacing the interval $I_i$ in the definition of $S$ by another interval $I_i'$ such that $I_i \subset I_i'$ and $b$ is an interior point of $I_i'$.
    \end{enumerate} 
\end{remark}

We also define the closures of the parabolic semijets.
\begin{definition}
\begin{enumerate}
\item[]
    \item For given a function $u \in \USC(\Omega \times S)$, $x_0 \in \Omega$, $t_0 \in S$, we define $\overline{\mathcal{P}}^{2,+}u(x_{0},t_0)$ to be the set of all triples $(a,p,X) \in \R^k \times \R^d \times \sym_{d}$, for which there exist sequences 
    \begin{align*}
    &(a_n, p_n,X_n)_{n \geq 1} \subseteq \R^k \times \R^d \times \sym_{d} \quad \text{and} \\&(x_n,t_n)_{n \geq 1} \subseteq \Omega \times S
    \end{align*} 
    such that $(a_n, p_n,X_n) \in \mathcal{P}^{2,+}u(x_n,t_n)$ and 
    \begin{align*}
       &(a_n, p_n,X_n) \to (a, p,X), \quad 
       x_n \to x_0, \quad t_{ni} \to t_{0i}^{-}, \quad \text{and}  \\
       &u(x_n,t_n) \to u(x_0,t_0) 
    \end{align*}
    as $n \to \infty$.
    \item For given a function $u \in \LSC(\Omega \times S)$, $x_0 \in \Omega$, $t_0 \in S$, we define $\overline{\mathcal{P}}^{2,-}u(x_{0},t_0)$ to be the set of all triples $(a,p,X) \in \R^k \times \R^d \times \sym_{d}$, for which there exist sequences \begin{align*}
    &(a_n, p_n,X_n)_{n \geq 1} \subseteq \R^k \times \R^d \times \sym_{d} \quad \text{and} \\&(x_n,t_n)_{n \geq 1} \subseteq \Omega \times S
    \end{align*}  such that $(a_n, p_n,X_n) \in \mathcal{P}^{2,-}u(x_n,t_n)$ and 
    \begin{align*}
       &(a_n, p_n,X_n) \to (a, p,X), \quad 
       x_n \to x_0, \quad t_{ni} \to t_{0i}^{-}, \quad \text{and}  \\
       &u(x_n,t_n) \to u(x_0,t_0) 
    \end{align*}
    as $n \to \infty$.
\end{enumerate}
\end{definition}

Given the previous definitions, we define viscosity solutions of the Neumann problem 
\begin{equation}\label{2025-10-27: 
1}
\begin{cases}
        \partial_t u = \Delta_{p} (u^m)  &\text{in $\Omega \times (0,T)$} \\
        |\nabla (u^m)|^{p-2}\nabla (u^m) \cdot \nu = 0 \quad &\text{on $\partial \Omega \times (0,T)$}.
    \end{cases}
\end{equation}
in the following manner.

\begin{definition}\label{def:dne-viscosity}
We call a function 
\begin{enumerate}
    \item $u \in \USC(\overline{\Omega} \times (0,T))$ a \emph{viscosity subsolution of \eqref{2025-10-27: 1}} if 
   \[
   \begin{split}
       a &- m^{p-1}u(x_{0},t_{0})^{(m-1)(p-1)}|\xi|^{p-4}\\&\hspace{1cm}\Big((p-1)(m-1)u(x_{0},t_{0})^{-1}|\xi|^{4} + |\xi|^{2}\trace X + (p-2)\xi^{\top}X \xi\Big) \leq 0
   \end{split}   
    \]
for all $(a,\xi,X) \in \overline{\mathcal{P}}^{2,+}_{\overline{\Omega} \times (0,T)}u(x_0,t_0)$ and $(x_0,t_0) \in \Omega \times (0,T)$, and 
\[
\min \left\{ \begin{matrix}&a - m^{p-1}u(x_{0},t_{0})^{(m-1)(p-1)}|\xi|^{p-4}\\&\hspace{1cm}\Big((p-1)(m-1)u(x_{0},t_{0})^{-1}|\xi|^{4} + |\xi|^{2}\trace X + (p-2)\xi^{\top}X \xi\Big),\\ &m^{p-1}u(x_{0},t_{0})^{(m-1)(p-1)}|\xi|^{p-2}\xi \cdot \nu (x_0)\end{matrix}\right\} \leq 0
\]
for all $(a,\xi,X) \in \overline{\mathcal{P}}^{2,+}_{\overline{\Omega} \times (0,T)}u(x_0,t_0)$ and $(x_0,t_0) \in \partial \Omega \times (0,T)$;
\item $u \in \LSC(\overline{\Omega} \times (0,T))$ a \emph{viscosity supersolution of \eqref{2025-10-27: 1}} if 
   \[
   \begin{split}
       a &- m^{p-1}u(x_{0},t_{0})^{(m-1)(p-1)}|\xi|^{p-4}\\&\hspace{1cm}\Big((p-1)(m-1)u(x_{0},t_{0})^{-1}|\xi|^{4} + |\xi|^{2}\trace X + (p-2)\xi^{\top}X \xi\Big) \geq 0
   \end{split}   
    \]
for all $(a,\xi,X) \in \overline{\mathcal{P}}^{2,-}_{\overline{\Omega} \times (0,T)}u(x_0,t_0)$ and $(x_0,t_0) \in \Omega \times (0,T)$, and 
\[
\max \left\{ \begin{matrix}&a - m^{p-1}u(x_{0},t_{0})^{(m-1)(p-1)}|\xi|^{p-4}\\&\hspace{1cm}\Big((p-1)(m-1)u(x_{0},t_{0})^{-1}|\xi|^{4} + |\xi|^{2}\trace X + (p-2)\xi^{\top}X \xi\Big),\\ &m^{p-1}u(x_{0},t_{0})^{(m-1)(p-1)}|\xi|^{p-2}\xi \cdot \nu (x_0)\end{matrix}\right\} \geq 0
\]
for all $(a,\xi,X) \in \overline{\mathcal{P}}^{2,-}_{\overline{\Omega} \times (0,T)}u(x_0,t_0)$ and $(x_0,t_0) \in \partial \Omega \times (0,T)$;
\item  $u \in C(\overline{\Omega} \times (0,T))$ a \emph{viscosity solution of \eqref{2025-10-27: 1}} if $u$ is both a viscosity subsolution and viscosity supersolution of \eqref{2025-10-27: 1}.
\end{enumerate}
\end{definition}

We also study the auxiliary problem obtained by applying the transformation $v=\psi(u)$ to \eqref{2025-10-27: 1}. In the case $\lambda \ne 0$, we set $v= \frac{m}{\lambda} u^{\lambda}$ and use the following definition.

\begin{definition}\label{def:dne-viscosity-transformed}
For the problem
\begin{equation}\label{2025-10-27: 2}
     \begin{cases}
        \partial_{t}v = \lambda v \Delta_p v+|\nabla v|^{p}\quad  &\text{in $\Omega \times (0,T)$} \\
        |\nabla v|^{p-2}\nabla v \cdot \nu = 0 \quad &\text{on $\partial \Omega \times (0,T)$}
    \end{cases}
\end{equation}
We call a function
    \begin{enumerate}
        \item 
$v \in \USC(\overline{\Omega} \times (0,T))$ a \emph{viscosity subsolution of \eqref{2025-10-27: 2}}
if
\[
a - \lambda v(x_{0},t_{0})|\xi|^{p-4}(|\xi|^{2}\trace X + (p-2)\xi^{\top}X \xi) - |\xi|^{p}\leq 0
\]
for all $(a,\xi,X) \in \overline{\mathcal{P}}^{2,+}_{\overline{\Omega} \times (0,T)}v(x_0,t_0)$ and $(x_0,t_0) \in \Omega \times (0,T)$ and 
\[
\min \left\{ \begin{matrix}&a - \lambda v(x_{0},t_{0})|\xi|^{p-4}(|\xi|^{2}\trace X + (p-2)\xi^{\top}X \xi) - |\xi|^{p},\\ &|\xi|^{p-2}\xi \cdot \nu (x_0)\end{matrix}\right\} \leq 0
\]
for all $(a,\xi,X) \in \overline{\mathcal{P}}^{2,+}_{\overline{\Omega} \times (0,T)}u(x_0,t_0)$ and $(x_0,t_0) \in \partial \Omega \times (0,T)$;
\item 
$v \in \LSC(\overline{\Omega} \times (0,T))$ a \emph{viscosity supersolution of \eqref{2025-10-27: 2}} if
\[
        a - \lambda v(x_{0},t_{0})|\xi|^{p-4}(|\xi|^{2}\trace X + (p-2)\xi^{\top}X \xi) - |\xi|^{p}\geq 0  
\]
for all $(a,\xi,X) \in \overline{\mathcal{P}}^{2,-}_{\overline{\Omega} \times (0,T)}v(x_0,t_0)$ and $(x_0,t_0) \in \Omega \times (0,T)$ and 
\[
 \max \left\{ \begin{matrix}&a - \lambda v(x_{0},t_{0})|\xi|^{p-4}(|\xi|^{2}\trace X + (p-2)\xi^{\top}X \xi) - |\xi|^{p},\\ &|\xi|^{p-2}\xi \cdot \nu (x_0)\end{matrix}\right\} \geq 0   
\]
for all $(a,\xi,X) \in \overline{\mathcal{P}}^{2,-}_{\overline{\Omega} \times (0,T)}u(x_0,t_0)$ and $(x_0,t_0) \in \partial \Omega \times (0,T)$;
\item $v \in C(\overline{\Omega} \times (0,T))$ a \emph{viscosity solution of \eqref{2025-10-27: 2}} if $v$ is both a viscosity subsolution and viscosity supersolution of \eqref{2025-10-27: 1}.
    \end{enumerate}
\end{definition}

Likewise, in the case $\lambda=0$, we set $v= \frac{1}{p-1} \log u$ and use the following definition.

\begin{definition}\label{def:dne-viscosity-transformed-trudinger}
For the problem
\begin{equation}\label{2025-10-27: new}
     \begin{cases}
        \partial_{t}v = \frac{1}{p-1}\Delta_p v+|\nabla v|^{p}\quad  &\text{in $\Omega \times (0,T)$,} \\
        |\nabla v|^{p-2}\nabla v \cdot \nu = 0 \quad &\text{on $\partial \Omega \times (0,T)$,}
    \end{cases}
\end{equation}
we call a function
    \begin{enumerate}
        \item 
$v \in \USC(\overline{\Omega} \times (0,T))$ a \emph{viscosity subsolution of \eqref{2025-10-27: new}} if
\[
a - \frac{1}{p-1}|\xi|^{p-4}(|\xi|^{2}\trace X + (p-2)\xi^{\top}X \xi) - |\xi|^{p}\leq 0
\]
for all $(a,\xi,X) \in \overline{\mathcal{P}}^{2,+}_{\overline{\Omega} \times (0,T)}v(x_0,t_0)$ and $(x_0,t_0) \in \Omega \times (0,T)$ and 
\[
\min \left\{ \begin{matrix}&a - \frac{1}{p-1}|\xi|^{p-4}(|\xi|^{2}\trace X + (p-2)\xi^{\top}X \xi) - |\xi|^{p},\\ &|\xi|^{p-2}\xi \cdot \nu (x_0)\end{matrix}\right\} \leq 0
\]
for all $(a,\xi,X) \in \overline{\mathcal{P}}^{2,+}_{\overline{\Omega} \times (0,T)}u(x_0,t_0)$ and $(x_0,t_0) \in \partial \Omega \times (0,T)$;
\item 
$v \in \LSC(\overline{\Omega} \times (0,T))$ a \emph{viscosity supersolution of \eqref{2025-10-27: new}} if
\[
        a - \frac{1}{p-1}|\xi|^{p-4}(|\xi|^{2}\trace X + (p-2)\xi^{\top}X \xi) - |\xi|^{p}\geq 0  
\]
for all $(a,\xi,X) \in \overline{\mathcal{P}}^{2,-}_{\overline{\Omega} \times (0,T)}v(x_0,t_0)$ and $(x_0,t_0) \in \Omega \times (0,T)$ and 
\[
 \max \left\{ \begin{matrix}&a - \frac{1}{p-1}|\xi|^{p-4}(|\xi|^{2}\trace X + (p-2)\xi^{\top}X \xi) - |\xi|^{p},\\ &|\xi|^{p-2}\xi \cdot \nu (x_0)\end{matrix}\right\} \geq 0   
\]
for all $(a,\xi,X) \in \overline{\mathcal{P}}^{2,-}_{\overline{\Omega} \times (0,T)}u(x_0,t_0)$ and $(x_0,t_0) \in \partial \Omega \times (0,T)$;
\item $v \in C(\overline{\Omega} \times (0,T))$ a \emph{viscosity solution of \eqref{2025-10-27: new}} if $v$ is both a viscosity subsolution and viscosity supersolution of \eqref{2025-10-27: new}.
    \end{enumerate}
\end{definition}

By applying the definitions of viscosity solutions and the parabolic semijets, one can show that a positive function $u \in C(\Omega \times(0,T))$ is a viscosity solution of \eqref{2025-10-27: 1} if and only if $v:=\frac{m}{\lambda}u^{\lambda}$ is a viscosity solution of \eqref{2025-10-27: 2} if $\lambda \ne 0$, or if $v:=\frac{1}{p-1}u^{\lambda}$ is a viscosity solution of \eqref{2025-10-27: new} if $\lambda \ne 0$.

\subsection{The parabolic theorem on sums}
The most important technical tool used in our proof is the maximum principle for semicontinuous functions. Although a parabolic version of this theorem is already available, for example, in Section 3 of \cite{Crandall-Ishii-maximum-semicontinuous}, it is not suitable for our purposes, since we always work with functions depending on different time variables and our definition of the parabolic semijets deviates from the one used in the existing results. Therefore, we devote the remainder of this section to proving a modified version of the parabolic theorem on sums, which is tailored to our needs. The proof is a straightforward application of the standard theorem on sums found in \cite{Crandall-Ishii-maximum-semicontinuous}. 

In order to understand the formulation of our statement, we first need to make precise the notion of a function touching a value for the first time.

\begin{definition}[First touching point]\label{def:first-touching-point}
    Let $\Omega \subseteq \R^d$, $Z: \Omega \times \Omega \times (0,\infty) \times (0,\infty) \to\R$, $c \in \R$ and $P_0:= (x_0,y_0;t_0,s_0) \in \Omega \times \Omega \times (0, \infty) \times (0, \infty)$. We say that $Z$ \emph{touches $c \in \R$ from above for the first time at $P_0$} if 
    \begin{equation}\label{eq:first-touching}
    Z(P_0) = c \quad \text{and} \quad Z(P) >c
    \end{equation}
    for all $P = (x,y;t,s) \in \Omega \times \Omega \times (0, \infty) \times (0, \infty)$ such that \[
    |(t,s)|< |(t_0, s_0)|,
    \]
    where $| \cdot |$ denotes the Euclidean norm on $\R^2$.
\end{definition}

By reversing the inequality in \eqref{eq:first-touching}, we also understand what is meant by a function $Z$ \emph{touching $c \in \R$ from below for the first time}. 

\begin{remark}
    We essentially consider pairs of times $(t,s) \in (0,\infty) \times (0, \infty)$ as being partially ordered with respect to the Euclidean norm, that is, we consider $(t,s)$ as occurring before $(t_0,s_0)$ if and only if $|(t,s)| < |(t_0,s_0)|$. Through this idea, one may also generalise Definition \ref{def:first-touching-point} for functions of any finite number of spatial or time variables. However, Definition \ref{def:first-touching-point} only compares the function values of $Z$ at $(t_0,s_0)$ with those at times before $(t_0,s_0)$. In particular, the definition makes no assumption about what occurs at other times $(t,s)$ for which $|(t,s)| = |(t_0,s_0)|$. Therefore, if $Z$ touches a value $c \in \R$ for the first time at a point $P_0 = (x_0,y_0;t_0,s_0)$, we do not exclude the possibility that there is another point $\hat{P}_0 = (\hat{x}_0,\hat{y}_0;\hat{t}_0,\hat{s}_0)$ with $|(\hat{t}_0,\hat{s}_0)| = |(t_0,s_0)|$, such that $Z$ also touches $c$ for the first time at $P_0$, but $(\hat{t}_0,\hat{s}_0) \ne (t_0,s_0)$.
\end{remark}

We first state and prove a new version of the parabolic theorem on sums adapted to our definitions.

\begin{theorem}[Parabolic theorem on sums]\label{thm:para-thm-sums}
For $i=1,\ldots,k$, let $\Omega_{i} \subseteq \R^{d_{i}}$ be locally compact and set $d:=d_{1}+\ldots+d_{k}$. Let $u_{i} \in \USC(\overline{\Omega_{i}}\times[S_{i},T_{i}])$ and consider the function
\[
u(x,t) := u_{1}(x_{1},t_{1}) + \ldots + u_{k}(x_{k},t_{k})
\]
defined for all $x:=(x_{1},\ldots,x_{k})^{\top}$, $t:=(t_{1},\ldots,t_{k})^{\top}$ with $x_{i} \in \Omega_{i}$ and $t_{i} \in (S_{i},T_{i}]$.
Suppose that for $a:=(a_{1},\ldots,a_{k})^{\top} \in \R^{k}$, $p:= (p_{1},\ldots,p_{k})^{\top} \in \R^{d}$, $X \in \sym_{d}$, and $(x,t) \in \Omega_{1}\times \ldots \times\Omega _{k} \times (S_{1},T_{1}] \times\ldots\times (S_{k},T_{k}]$, one has that
\[
(a,p,X) \in \mathcal{P}^{2,+}u(x,t).
\]
Then for all $\kappa >0$, there are matrices $X_{i} \in \sym_{d_{i}}$ such that for all $i=1,\ldots,k$,
\[
(a_{i},p_{i},X_{i}) \in \overline{\mathcal{P}}^{2,+}u_{i}(x_{i},t_{i})
\]
and
\[
-\left( \frac{1}{\kappa} + \|X\| \right)I_{d} \leq \begin{pmatrix}
X_{1} & \cdots & 0 \\ 
\vdots & \ddots & \vdots\\
0 & \cdots & X_{k}
\end{pmatrix} \leq X + \kappa X^{2}.
\]
\end{theorem}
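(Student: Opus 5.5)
Our plan is to reduce to the elliptic theorem on sums of Crandall and Ishii \cite{Crandall-Ishii-maximum-semicontinuous} by absorbing the time variables into the spatial ones and penalising forward times. Fix $(a,p,X)\in\mathcal{P}^{2,+}u(\hat x,\hat t)$. Since the superjet only records one-sided information ($t_i\to\hat t_i^-$), we first restrict to the closed sets $\widehat\Omega_i:=\Omega_i\times(S_i,\hat t_i]$, which are locally compact, and regard $w_i(x_i,t_i):=u_i(x_i,t_i)$ as upper semicontinuous functions of the enlarged variable $z_i=(x_i,t_i)\in\R^{d_i+1}$. By the definition of the superjet there is a test function $\varphi\in C^2$ in $x$ and $C^1$ in $t$ with $u-\varphi$ attaining a strict local maximum at $(\hat x,\hat t)$ relative to $\prod_i\widehat\Omega_i$, $\varphi$ having the derivatives $(a,p,X)$ there; the standard construction (e.g.\ $\varphi=\langle a,t-\hat t\rangle+\langle p,x-\hat x\rangle+\tfrac12\langle X(x-\hat x),x-\hat x\rangle+\rho(|\hat t-t|_1+|x-\hat x|^2)$ with a suitable $C^1$ modulus $\rho$, plus a quartic/strictness term) is routine.

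Next we apply the elliptic theorem on sums in the variables $z=(z_1,\dots,z_k)$ on $\prod_i\widehat\Omega_i$, but doing so directly gives no information on the time components of the second order matrix and mixes $x$ and $t$. To avoid this, we instead use the Crandall--Ishii doubling scheme in the spatial variables only: for $\alpha>0$, maximise
\[
\Phi_\alpha(x,t,\xi,\tau)=\sum_i u_i(x_i,t_i)-\varphi(\xi,\tau)-\tfrac{\alpha}{2}|x-\xi|^2-\tfrac{\alpha}{2}|t-\tau|^2
\]
over a compact neighbourhood in the restricted domain. Strictness of the maximum gives maximisers $(x^\alpha,t^\alpha,\xi^\alpha,\tau^\alpha)\to(\hat x,\hat t,\hat x,\hat t)$ with $u_i(x_i^\alpha,t_i^\alpha)\to u_i(\hat x_i,\hat t_i)$. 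Equivalently, and more economically, we invoke Theorem 3.2 of \cite{Crandall-Ishii-maximum-semicontinuous} (the version on locally compact sets with the $(1/\kappa+\|A\|)$ and $A+\kappa A^2$ bounds) for the functions $w_i$ on $\widehat\Omega_i$ with the test function $\varphi$, yielding $(b_i,Y_i)\in\overline{J}^{2,+}_{\widehat\Omega_i}w_i(\hat z_i)$ with $b_i=(p_i,a_i)$ and block matrices $Y_i\in\sym_{d_i+1}$ satisfying the two-sided bound with $A=D^2_z\varphi(\hat z)$. The point is that we are allowed to choose $\varphi$ quadratic in $x$ and \emph{linear} in $t$ near $\hat t$ (the $o(|\hat t-t|_1)$ remainder is absorbed since the domain only allows $t_i\le\hat t_i$ and we add $\rho$ in the first-order time term), so $A=\operatorname{diag}(X,0)$ up to ordering, and $\|A\|=\|X\|$, $A+\kappa A^2=\operatorname{diag}(X+\kappa X^2,0)$.

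Finally we translate back. Let $X_i$ be the spatial $d_i\times d_i$ block of $Y_i$. An element of $J^{2,+}_{\widehat\Omega_i}w_i(z)$ at a point $z=(x_i,t_i)$ with $t_i\le\hat t_i$ gives, by discarding the quadratic time and mixed terms (which are $o(|t_i-t|+|x_i-x|^2)$ after Young's inequality $|x||t|\le\epsilon|x|^2+\epsilon^{-1}|t|^2$ and $|t|^2=o(|t|)$) and restricting to $t<t_i$, an element of $\mathcal{P}^{2,+}u_i(x_i,t_i)$; passing to limits gives $(a_i,p_i,X_i)\in\overline{\mathcal{P}}^{2,+}u_i(\hat x_i,\hat t_i)$, provided the approximating times satisfy $t_{ni}\to\hat t_i^-$. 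The matrix inequality for the $X_i$ follows by restricting the inequality for $\operatorname{diag}(Y_1,\dots,Y_k)$ to vectors with zero time components. The main obstacle is exactly the last proviso: the approximating points in the closure may have $t_{ni}=\hat t_i$ (on the edge of $\widehat\Omega_i$) rather than strictly below, and our closure definition requires approach from the left. We intend to handle this by replacing $\hat t_i$ in the restricted domain by $\hat t_i$ and perturbing: a jet at a point with $t_{ni}=\hat t_i$ is itself approximable by jets at earlier times after adding $\delta(\hat t_i-t_i)^{-}$-type shifts, or simply by translating in time (using upper semicontinuity and a diagonal argument with domains $\Omega_i\times(S_i,\hat t_i-\delta]$ and $\delta\to0$), so that all approximating times lie strictly below $\hat t_i$.
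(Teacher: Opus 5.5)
\textbf{Verdict.} Your route is the paper's, but two steps need repair. You treat $(x_i,t_i)$ as one variable and enforce the one-sided time condition: you use the locally compact sets $\Omega_i\times(S_i,\hat t_i]$, while the paper sets $\tilde u_i=-\infty$ off a compact neighbourhood $K_i\subset\{s_i\le t_i\}$. You then apply the elliptic Crandall--Ishii theorem with a test function whose Hessian vanishes in the time rows and columns, read off the spatial blocks, and discard the $t$--$t$ and mixed second-order terms to pass from $J^{2,+}$ to $\mathcal{P}^{2,+}$.

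\textbf{First gap: the test function.} The test function you describe does not exist in general. The elliptic theorem needs $\varphi$ twice continuously differentiable in all variables, so ``$C^1$ in $t$'' is not enough. Moreover, no $C^2$ function with $\partial_t\varphi=a$ at $(\hat x,\hat t)$ can dominate a remainder such as $(\hat t_i-t_i)^{3/2}$, which is $o(\hat t_i-t_i)$ but not $O((\hat t_i-t_i)^2)$. For instance, $u_1=a(t-\hat t)+(\hat t-t)^{3/2}$ for $t\le\hat t$ has $(a,0,0)\in\mathcal{P}^{2,+}u_1(\hat x,\hat t)$, but admits no such $\varphi$. Likewise, a pure quadratic with Hessian exactly $X$ does not absorb $o(|x-\hat x|^2)$. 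What you must do, and what the paper does, is the following:
\begin{enumerate}
\item Fix $\eta>0$ and bound the remainder by $\frac{\eta}{2}(|\hat t-t|_1+|x-\hat x|^2)$ on a small neighbourhood. On $\{t\le\hat t\}$ the term $|\hat t-t|_1$ is linear in $t$, so the test function is a polynomial.
\item Apply the theorem to obtain the conclusion for the perturbed jet, $(a_i-\frac{\eta}{2},p_i,X_{i,\eta}+\eta I)\in\overline{\mathcal{P}}^{2,+}u_i(\hat x_i,\hat t_i)$, with the two-sided matrix bounds on the $X_{i,\eta}$.
\item Let $\eta\to0$. The matrix inequality bounds $X_{i,\eta}$ uniformly, and $\overline{\mathcal{P}}^{2,+}u_i(\hat x_i,\hat t_i)$ is closed under such limits by a diagonal argument.
\end{enumerate}
Your parenthetical about adding $\rho$ to the first-order time term points in this direction. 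However, you then proceed with the exact derivatives $(a,p,X)$ and $A=\operatorname{diag}(X,0)$, and never carry out the limit.

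\textbf{Second issue: the ``main obstacle'' is not one.} You should drop your remedy for it. In this paper the closure allows approximating times $t_{ni}\le t_{0i}$, equality included. The proof of Lemma \ref{lem:product-rule} takes $t_n\le t_0$, and the paper's proof of this very theorem only gets $t^{(n)}_{i,\eta}\le t_i$, from the fact that the points lie in $K_i$. Your approximating points lie in $\Omega_i\times(S_i,\hat t_i]$, so they already qualify and nothing more is needed.

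Your proposed fix cannot work anyway. Time translation is not available for an arbitrary upper semicontinuous $u_i$. More seriously, the strict-approach version you are aiming for is false. Take $k=1$ and $u_1=-1$ for $t<\hat t$, $u_1=0$ for $t\ge\hat t$. This function is upper semicontinuous and has $(a,0,0)\in\mathcal{P}^{2,+}u_1(\hat x,\hat t)$ for every $a$. Yet no sequence with $t_n<\hat t$ can satisfy $u_1(x_n,t_n)\to u_1(\hat x,\hat t)=0$.
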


\begin{proof}
    
By the definition of $(a,p,X) \in \mathcal{P}^{2,+}u(x,t)$, one has
\[
\begin{split}
u(y,s) &\leq u(x,t) + \langle a ,s-t\rangle + \langle p ,y-x\rangle \\
&\hspace{1cm}+ \frac{1}{2}\langle X(y-x),y-x \rangle + o(t_{1}-s_{1}) + o\left(|y-x|^{2}+ \sum_{i=1}^{k}(t_{i}-s_{i})\right)
\end{split}
\]
as $(y,s) \to (x,t)$ such that $s_{i} < t_{i}$ for all $i=1,\ldots,k$. More precisely, for every $\eta>0$, one can find a sufficiently small compact neighbourhood $K_{i}$ of $(x_{i},t_{i})$ such that $s_{i} \leq t_{i}$ whenever $(y_{i},s_{i}) \in K_{i}$ and
\[
\begin{split}
u_{1}(y_{1},s_{1}) &+ \ldots + u_{k}(y_{k},s_{k}) \\&\leq u_{1}(x_{1},t_{1}) + \ldots + u_{k}(x_{k},t_{k}) \\&\hspace{0.5cm}+ \langle a,s-t \rangle  + \langle p,y-x \rangle + \frac{1}{2}\langle X(y-x),y-x \rangle \\&\hspace{0.5cm}+ \frac{\eta}{2}(t_{1}-s_{1}) + \ldots + \frac{\eta}{2}(t_{k}-s_{k}) + \frac{\eta}{2}|y-x|^{2}
\end{split}
\]
for all $(y,s) \in U_{1}\times\ldots\times U_{k}$.

If we define functions $\tilde{u}_{i}: \R^{d_{i}+1} \to \R \cup \{ -\infty \}$ by
\[
\tilde{u}_{i}(y_{i},s_{i}):= \begin{cases}
\hspace{-0.75cm}\begin{matrix}&u(y_{i},s_{i}) - u(x_{i},t_{i}) \\&\hspace{0.5cm}- a_{i} (s_{i} - t_{i}) - \langle p_{i} ,y_{i}-x_{i}\rangle\\&\hspace{0.5cm} - \frac{\eta}{2}(t_{i} - s_{i}) - \frac{\eta}{2}|y_{i}-x_{i}|^{2}\end{matrix} &\text{if $(y_{i},s_{i}) \in K_{i}$,}\\\\
- \infty & \text{if $(y_{i},s_{i}) \in\R^{d_{i}+1} \setminus K_{i}$,}
\end{cases}
\]
then
\[
\tilde{u}_{1}(y_{1},s_{1}) + \ldots + \tilde{u}_{k}(y_{k},s_{k}) \leq \frac{1}{2}\langle X(y-x),y-x \rangle 
\]
holds on $\R^{d+k}$. In particular, since $\tilde{u}_{i}(x_{i},t_{i})=0$ for each $i=1,\ldots,k$, we see that equality is attained at the point $(y,s)=(x,t)$. Therefore, the function \[\tilde{u}_{1}+\ldots+\tilde{u}_{k} - \frac{1}{2}\langle X(\cdot-x),\cdot-x \rangle\] attains a maximum at the point $(x,t)$. We note that the upper semicontinuity of $u$ and the compactness of $K_{i}$ will imply $\tilde{u}_{i} \in \USC(\R^{d_{i}+1})$ for all $i=1,\ldots,k$, and thus we may apply the maximum principle for semicontinuous functions by Crandall and Ishii \cite[Theorem 1]{Crandall-Ishii-maximum-semicontinuous} to conclude that for every $\kappa>0$, there exist matrices $\tilde{X}_{i,\eta} \in \sym_{d}$ such that
\[
(0,\tilde{X}_{i,\eta}) \in \overline{J}^{2,+}\tilde{u}_{i}(x_{i},t_{i})
\]
and
\[
-\left( \frac{1}{\kappa} + \|A\| \right)I_{d+k} \leq \begin{pmatrix}
\tilde{X}_{1,\eta} & \cdots & 0 \\ 
\vdots & \ddots & \vdots\\
0 & \cdots & \tilde{X}_{k,\eta}
\end{pmatrix} \leq A + \kappa A^{2},
\]
where
\[
A= \begin{pmatrix}
X_{11} &0 & \cdots & X_{1k}  & 0\\ 
0 & 0 & \cdots & 0 & 0\\
\vdots & \vdots & \ddots & \vdots & \vdots \\
X_{k1} & 0 & \cdots & X_{kk} & 0 \\
0 & 0 & \cdots & 0 & 0
\end{pmatrix}
\]
is the Hessian of $\frac{1}{2}\langle X(\cdot-x),\cdot-x \rangle$ in all variables $x_{1},t_{1},\ldots,x_{k},t_{k}$. Here $X_{ij}$ denotes the ${ij}^{th}$ block of the matrix $X \in \sym_{d}$ when partitioned in the natural way according to the decomposition $d=d_{1}+\ldots+d_{k}$. By considering the appropriate principle minor of the above matrices, this condition implies
\[
-\left( \frac{1}{\kappa} + \|A\| \right)I_{d} \leq \begin{pmatrix}
X_{1,\eta} & \cdots & 0 \\ 
\vdots & \ddots & \vdots\\
0 & \cdots & X_{k,\eta}
\end{pmatrix} \leq A + \kappa A^{2},
\]
where we now use $X_{i,\eta}$ to denote the truncation of the matrix $\tilde{X}_{i,\eta}$, which removes the last row and column. In addition, $(0,\tilde{X}_{i,\eta}) \in \overline{J}^{2,+}\tilde{u}_{i}(x_{i},t_{i})$ implies that $(0,0,X_{i,\eta}) \in \overline{\mathcal{P}}^{2,+}\tilde{u}_{i}(x_{i},t_{i})$. Indeed, by the definition of $(0,\tilde{X}_{i,\eta}) \in \overline{J}^{2,+}\tilde{u}_{i}(x_{i},t_{i})$, there are sequences \[(\tilde{p}_{i,\eta}^{(n)},\tilde{X}_{i,\eta}^{(n)})_{n \geq 1} \subseteq \R^{d_{i}+1} \times \sym_{d_{i}+1} \quad \text{and} \quad (x_{i,\eta}^{(n)})_{n \geq 1} \subseteq \R^{d_{i}+1}\]  such that 
\[
\begin{split}
    &(\tilde{p}_{i,\eta}^{(n)},\tilde{X}_{i,\eta}^{(n)}) \in J^{2,+}\tilde{u}_{i}(x_{i,\eta}^{(n)},t_{i,\eta}^{(n)}), \quad (\tilde{p}_{i,\eta}^{(n)},\tilde{X}_{i,\eta}^{(n)}) \to (0,\tilde{X}_{i,\eta})\\ &(x_{i,\eta}^{(n)},t_{i,\eta}^{(n)}) \to (x_{i},t_{i}), \quad \text{and} \quad  \tilde{u}_{i}(x_{i,\eta}^{(n)},t_{i,\eta}^{(n)}) \to \tilde{u}_{i}(x_{i},t_{i})
\end{split}
\]
as $n \to \infty$. Therefore
\begin{align*}
\tilde{u}_{i}(y_{i},s_{i}) &\leq \tilde{u}_{i}(x_{i,\eta}^{(n)},t_{i,\eta}^{(n)}) + \langle \tilde{p}_{i,\eta}^{(n)},(y_{i},s_{i})-(x_{i,\eta}^{(n)},t_{i,\eta}^{(n)})\rangle \\&\hspace{1cm}+ \frac{1}{2}\langle \tilde{X}_{i,\eta}^{(n)}((y_{i},s_{i})-(x_{i,\eta}^{(n)},t_{i,\eta}^{(n)})),(y_{i},s_{i})-(x_{i,\eta}^{(n)},t_{i,\eta}^{(n)}) \rangle \\&\hspace{1cm} +  o(|y_{i}-x_{i,\eta}^{(n)}|^{2}+(t_{i,\eta}^{(n)}-s_{i})^{2})\\
&= \tilde{u}_{i}(x_{i,\eta}^{(n)},t_{i,\eta}^{(n)}) + a_{i,\eta}^{(n)}(s_{i} - t_{i,\eta}^{(n)})+ \langle p_{i,\eta}^{(n)}, y_{i}-x_{i,\eta}^{(n)}\rangle \\&\hspace{1cm}+ \frac{1}{2}\langle X_{i,\eta}^{(n)}(y_{i}-x_{i,\eta}^{(n)}),y_{i}-x_{i,\eta}^{(n)} \rangle +  o(|y_{i}-x_{i,\eta}^{(n)}|^{2}+|t_{i,\eta}^{(n)}-s_{i}|)
\end{align*}
as $(y_{i},s_{i}) \to (x_{i,\eta}^{(n)},t_{i}^{(n)})$ in $\R^{d_{i}+1}$, where $\tilde{p}_{i,\eta}^{(n)} = (p_{i,\eta}^{(n)},a_{i,\eta}^{(n)})$ and $X_{i,\eta}^{(n)}$ is the truncation of $\tilde{X}_{i,\eta}^{(n)}$, which removes the final row and column. In particular, for $s_{i} \leq t_{i,\eta}^{(n)}$,
\[
\begin{split}
\tilde{u}_{i}(y_{i},s_{i}) &\leq \tilde{u}_{i}(x_{i,\eta}^{(n)},t_{i,\eta}^{(n)}) + a_{i,\eta}^{(n)}(s_{i}-t_{i,\eta}^{(n)}) + \langle p_{i,\eta}^{(n)},y_{i}-x_{i,\eta}^{(n)}\rangle \\&\hspace{1cm}+ \frac{1}{2}\langle X_{i,\eta}^{(n)}(y_{i}-x_{i,\eta}^{(n)}),y_{i}-x_{i,\eta}^{(n)} \rangle +  o(|y_{i}-x_{i,\eta}^{(n)}|^{2}+t_{i,\eta}^{(n)}-s_{i})
\end{split}
\]
as $y_{i} \to x_{i,\eta}^{(n)}$ in $\R^{d_{i}}$ and $s_{i} \to t_{i}^{(n)}$ in $\R$ from below. We note that since $\tilde{u}_{i}(x_{i},t_{i})$ is finite and $\tilde{u}_{i}$ was set to be $-\infty$ outside of $K_{i}$, there must be a subsequence of $(x_{i,\eta}^{(n)},t_{i,\eta}^{(n)})_{n \geq 1}$, which lies in $K_{i}$. In particular, we have $t_{i,\eta}^{(n)}  \leq t_{i}$. We work with this subsequence, again denoting it by $(x_{i,\eta}^{(n)},t_{i,\eta}^{(n)})_{n \geq 1}$. Then, it follows that $(a_{i,\eta}^{(n)},p_{i,\eta}^{(n)},X_{i,\eta}^{(n)}) \in \mathcal{P}^{2,+}\tilde{u}(x_{i,\eta}^{(n)},t_{i,\eta}^{(n)})$ for each $n \geq 1$, and so we have \linebreak $(0,0,X_{i,\eta}) \in \overline{\mathcal{P}}^{2,+}\tilde{u}_{i}(x_{i},t_{i})$.

Since the points $(x_{i,\eta}^{(n)},t_{i,\eta}^{(n)})$ and $(y_{i},s_{i})$ can be taken to lie in $K_{i}$, we use the definition of $\tilde{u}_{i}$ on $K_{i}$, to obtain
\[
\begin{split}
u_{i}(y_{i},s_{i})  &\leq 
u_{i}(x_{i,\eta}^{(n)},t_{i,\eta}^{(n)})+ \left( a_{i,\eta}^{(n)}+a_{i}-\frac{\eta}{2} \right)(s_{i}-t_{i,\eta}^{(n)})  + \langle p_{i}+p_{i,\eta}^{(n)},y_{i}-x_{i,\eta}^{(n)}\rangle \\&\hspace{1cm}+ \frac{1}{2}\langle X_{i,\eta}^{(n)}(y_{i}-x_{i,\eta}^{(n)}),y_{i}-x_{i,\eta}^{(n)} \rangle + \frac{\eta}{2}|y_{i}-x_{i}|^{2} - \frac{\eta}{2}|x_{i,\eta}^{(n)}-x_{i}|^{2} \\&\hspace{1cm}+ o(|y_{i}-x_{i,\eta}^{(n)}|^{2}+t_{i,\eta}^{(n)}-s_{i}).
\end{split}
\]
Rewriting $\frac{\eta}{2}|y_{i}-x_{i}|^{2} - \frac{\eta}{2}|x_{i,\eta}^{(n)}-x_{i}|^{2}$ as
\[
\begin{split}
 &\frac{\eta}{2}\left(|y_{i} - x_{i,\eta}^{(n)} + x_{i,\eta}^{(n)} -x_{i}|^{2}- |x_{i,\eta}^{(n)}-x_{i}|^{2}\right) \\&\hspace{0.5cm}= \frac{\eta}{2}|y_{i}-x_{i,\eta}^{(n)}|^{2}+ \eta \langle x_{i,\eta}^{(n)}-x_{i}, y_{i}-x_{i,\eta}^{(n)} \rangle  
\end{split}
\]
leads to
\[
\begin{split}
u_{i}(y_{i},s_{i})  &\leq 
u_{i}(x_{i,\eta}^{(n)},t_{i,\eta}^{(n)})+ \left( a_{i,\eta}^{(n)}+a_{i}-\frac{\eta}{2} \right)(s_{i}-t_{i,\eta}^{(n)})  \\&\hspace{1cm}+ \langle p_{i}+p_{i,\eta}^{(n)} + \eta  (x_{i,\eta}^{(n)}-x_{i}), y_{i}-x_{i,\eta}^{(n)}\rangle \\&\hspace{1cm}+ \frac{1}{2}\langle (X_{i,\eta}^{(n)} + \eta I_{d_{i}})(y_{i}-x_{i,\eta}^{(n)}),y_{i}-x_{i,\eta}^{(n)} \rangle \\&\hspace{1cm}+ o(|y_{i}-x_{i,\eta}^{(n)}|^{2}+t_{i,\eta}^{(n)}-s_{i}).
\end{split}
\]
This expresses that
\[
\left(a_{i,\eta}^{(n)}+a_{i}-\frac{\eta}{2},p_{i}+p_{i,\eta}^{(n)} + \eta(x_{i,\eta}^{(n)}-x_{i}),X_{i,\eta}^{(n)} + \eta I_{d_{i}}\right) \in \mathcal{P}^{2,+}u_{i}(x_{i,\eta}^{(n)},t_{i,\eta}^{(n)}).
\]
Considering the behaviour as $n \to \infty$, we find that
\[
\left(a_{i}-\frac{\eta}{2},p_{i},X_{i,\eta} + \eta I_{d_{i}}\right) \in \overline{\mathcal{P}}^{2,+}u_{i}(x_{i},t_{i}).
\]
Taking $\eta \to 0$ as in the proof of Crandall and Ishii, we find that there is a matrix $X_{i} \in \sym_{d_{i}}$ such that $X_{i,\eta} \to X_{i}$ and so we have that $(a_{i},p_{i},X_{i}) \in \overline{\mathcal{P}}^{2,+}u_{i}(x_{i},t_{i})$.
\end{proof}

Using our parabolic theorem on sums, we deduce the following statement. 

\begin{corollary}[Parabolic maximum principle for semicontinuous functions I]\label{cor:para-max-semi}
For $i=1,\ldots,k$, let $\Omega_{i} \subseteq \R^{d_{i}}$ be locally compact, and set $\Omega:= \Omega_{1}\times\ldots\times \Omega _{k}$ and $S:= (S_{1},T_{1}] \times \ldots \times (S_{k},T_{k}]$. Let $u_{i} \in \USC(\overline{\Omega_{i}}\times[S_{i},T_{i}])$ and let $\varphi:\Omega \times S \to \R$, $\varphi:=\varphi(x_{1},\ldots,x_{k},t_{1},\ldots,t_{k})$ be twice continuously differentiable in each of the variables $x_{i}$ and continuously differentiable in the variables $t_{i}$ for $i=1,\ldots,k$. Consider the function
\[
Z(x,t) := u_{1}(x_{1},t_{1}) + \ldots + u_{k}(x_{k},t_{k}) - \varphi(x,t)
\]
defined for all $x:=(x_{1},\ldots,x_{k})^{\top} \in \Omega$ and $t:=(t_{1},\ldots,t_{k})^{\top} \in S$. If there exists a point $(x_{0},t_{0}) \in \Omega \times S$ such that $Z$ touches the value $Z(x_0,t_0)$ from below for the first time at $(x_0,t_0)$, then for all $\kappa >0$, there are matrices $X_{i} \in \sym_{d_{i}}$ such that for all $i=1,\ldots,k$,
\[
(\partial_{t_{i}}\varphi(x_{0},t_{0}),\nabla_{x_{i}}\varphi(x_{0},t_{0}),X_{i}) \in \overline{\mathcal{P}}^{2,+}u_{i}(x_{0i},t_{0i})
\]
and
\[
-\left( \frac{1}{\kappa} + \|X\| \right)I_{d} \leq \begin{pmatrix}
X_{1} & \cdots & 0 \\ 
\vdots & \ddots & \vdots\\
0 & \cdots & X_{k}
\end{pmatrix} \leq X + \kappa X^{2},
\]
where $X=D^{2}_{x}\varphi(x_{0},t_{0})$ and $d:=d_{1}+\ldots+d_{k}$.
\end{corollary}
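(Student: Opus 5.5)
The plan is to deduce the corollary from Theorem~\ref{thm:para-thm-sums} by converting the first-touching condition into a parabolic superjet of the sum $u:=u_1+\ldots+u_k$ at $(x_0,t_0)$, built from the derivatives of $\varphi$. Set $c:=Z(x_0,t_0)$. Because $Z$ touches $c$ from below for the first time at $(x_0,t_0)$, we have $Z(x,t)<c$ for every $(x,t)$ with $|t|<|t_0|$. The key observation is that if $t_i< t_{0i}$ for all $i$ (the only regime relevant for $\mathcal{P}^{2,+}$), then $|t|<|t_0|$, since all components are positive. Hence
\[
u(x,t)\le c+\varphi(x,t)=u(x_0,t_0)+\varphi(x,t)-\varphi(x_0,t_0)
\]
for all $x\in\Omega$ and all $t$ with $t_i<t_{0i}$ for each $i$. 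For $t$ with some $t_i=t_{0i}$ the inequality follows by upper semicontinuity of $u$ or continuity of $\varphi$, although the definition of the semijet only requires $t_i\to t_{0i}^-$.

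Next I will Taylor expand $\varphi$ about $(x_0,t_0)$. Using that $\varphi$ is $C^2$ in the $x_i$ and $C^1$ in the $t_i$, I aim to show
\[
\varphi(x,t)-\varphi(x_0,t_0)=\langle \partial_t\varphi,t-t_0\rangle+\langle\nabla_x\varphi,x-x_0\rangle+\tfrac12\langle D_x^2\varphi\,(x-x_0),x-x_0\rangle+o(|t_0-t|_1+|x-x_0|^2),
\]
with all derivatives evaluated at $(x_0,t_0)$. I will split the difference as $[\varphi(x,t)-\varphi(x,t_0)]+[\varphi(x,t_0)-\varphi(x_0,t_0)]$. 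The second bracket is handled by the second-order Taylor formula in $x$. For the first bracket, the mean value theorem in the time variables gives $\langle\partial_t\varphi(x,\tilde t),t-t_0\rangle$. Continuity of $\partial_t\varphi$ at $(x_0,t_0)$ then makes the error $o(1)\,|t-t_0|_1$. Combining the two displays yields $(\partial_t\varphi(x_0,t_0),\nabla_x\varphi(x_0,t_0),X)\in\mathcal{P}^{2,+}u(x_0,t_0)$ with $X=D^2_x\varphi(x_0,t_0)$.

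Finally I apply Theorem~\ref{thm:para-thm-sums} with $a=\partial_t\varphi(x_0,t_0)$ and $p=\nabla_x\varphi(x_0,t_0)$. Splitting $a$ and $p$ into their components $a_i=\partial_{t_i}\varphi$ and $p_i=\nabla_{x_i}\varphi$ gives exactly the claimed memberships in $\overline{\mathcal{P}}^{2,+}u_i(x_{0i},t_{0i})$ together with the matrix inequality.

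The main obstacle is the regularity bookkeeping in the Taylor step. The hypothesis says only that $\varphi$ is twice continuously differentiable in each $x_i$ separately, yet the expansion needs $D^2_x\varphi$ in the full variable $x$, including the mixed blocks $\nabla_{x_i}\nabla_{x_j}\varphi$. I would interpret the hypothesis as joint $C^2$ regularity in $x$, with continuous time derivatives, uniformly near $(x_0,t_0)$, and note this interpretation explicitly. A second point to check is that the theorem is stated for $t_i\in(S_i,T_i]$ with $x_i$ in the interior. This is consistent with the corollary, where $(x_0,t_0)\in\Omega\times S$.
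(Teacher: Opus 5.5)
Your proposal is correct and matches the paper's proof: the paper also rewrites the touching condition as $u(x,t)\le u(x_0,t_0)+\varphi(x,t)-\varphi(x_0,t_0)$ for earlier times, Taylor expands $\varphi$ to get $(\partial_t\varphi,\nabla_x\varphi,D^2_x\varphi)(x_0,t_0)\in\mathcal{P}^{2,+}u(x_0,t_0)$, and invokes Theorem~\ref{thm:para-thm-sums}. The paper tacitly assumes joint $C^2$ regularity in $x$ (it writes ``$\varphi\in C^2(Q)$''), just as you propose; your aside about times with some $t_i=t_{0i}$ is slightly off, since for $t\neq t_0$ positivity already gives $|t|<|t_0|$ and at $t=t_0$ upper semicontinuity gives the wrong-direction inequality, but this is harmless because the semijet only involves $t_i<t_{0i}$.
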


\begin{proof}
The premise of Corollary \ref{cor:para-max-semi} is described by the inequality
\[
u(x,t) \leq u(x_{0},t_{0}) + \varphi(x,t) - \varphi(x_{0},t_{0})
\]
holding for $x \in \Omega$ and $t$ with $t_{i} \leq t_{0,i}$. Since $\varphi \in C^{2}(Q)$, we may apply the Taylor expansion of $\varphi$ to obtain
\[
\begin{split}
  u(x,t) \leq u(x_{0},&t_{0}) + \langle \nabla_{t}\varphi(x_{0},t_{0}),t-t_{0}\rangle + \langle \nabla_{x}\varphi(x_{0},t_{0}),x-x_{0}\rangle \\&+ \frac{1}{2} \langle D^{2}_{x}\varphi(x_{0},t_{0})(x-x_{0}),x-x_{0} \rangle + o\left(|t_{0}-t|_1 +  |x-x_{0}|^{2} \right)  
\end{split}
\]
as $x \to x_{0}$ in $\Omega$ and $t_{i} \to t_{0i}^{-}$, which exactly expresses that
\[
(\nabla_{t}\varphi(x_{0},t_{0}),\nabla_{x}\varphi(x_{0},t_{0}),D^{2}_{x}\varphi(x_{0},t_{0})) \in \mathcal{P}^{2,+}u(x_{0},t_{0}).
\]
The conclusion now follows immediately from Theorem \ref{thm:para-thm-sums}.
\end{proof}

We also state the following variation of Corollary \ref{cor:para-max-semi}, which will be more readily applicable to the proof of the Harnack inequality, in which we typically consider a function attaining a minimum rather than a maximum. The corollary that follows can be obtained using the relation $\mathcal{P}^{2,-}u(x_{0},t_{0}) = -\mathcal{P}^{2,+}(-u)(x_{0},t_{0})$.

\begin{corollary}[Parabolic maximum principle for semicontinuous functions II]\label{cor:para-max-semi-min}
For $i=1,\ldots,k$, let $\Omega_{i} \subseteq \R^{d_{i}}$ be locally compact, and set $\Omega:= \Omega_{1}\times\ldots\times \Omega _{k}$ and $S:= (S_{1},T_{1}] \times \ldots \times (S_{k},T_{k}]$. Let $u_{i} \in \LSC(\overline{\Omega_{i}}\times[S_{i},T_{i}])$ and let $\varphi:\Omega \times S \to \R$, $\varphi:=\varphi(x_{1},\ldots,x_{k},t_{1},\ldots,t_{k})$ be twice continuously differentiable in each of the variables $x_{i}$ and continuously differentiable in the variables $t_{i}$ for $i=1,\ldots,k$. Consider the function
\[
Z(x,t) := u_{1}(x_{1},t_{1}) + \ldots + u_{k}(x_{k},t_{k}) - \varphi(x,t)
\]
defined for all $x:=(x_{1},\ldots,x_{k})^{\top} \in \Omega$ and $t:=(t_{1},\ldots,t_{k})^{\top} \in S$. If there exists a point $(x_{0},t_{0}) \in \Omega \times S$ such that $Z$ touches the value $Z(x_0,t_0)$ from above for the first time at $(x_0,t_0)$, then for all $\kappa >0$, there are matrices $X_{i} \in \sym_{d_{i}}$ such that for all $i=1,\ldots,k$,
\[
(\partial_{t_{i}}\varphi(x_{0},t_{0}),\nabla_{x_{i}}\varphi(x_{0},t_{0}),X_{i}) \in \overline{\mathcal{P}}^{2,-}u_{i}(x_{0i},t_{0i})
\]
and
\[
\left( \frac{1}{\kappa} + \|X\| \right)I_{d} \geq \begin{pmatrix}
X_{1} & \cdots & 0 \\ 
\vdots & \ddots & \vdots\\
0 & \cdots & X_{k}
\end{pmatrix} \geq X - \kappa X^{2},
\]
where $X=D^{2}_{x}\varphi(x_{0},t_{0})$.
\end{corollary}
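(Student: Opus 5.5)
The plan is to reduce Corollary \ref{cor:para-max-semi-min} to Corollary \ref{cor:para-max-semi} by a sign change, using the relation $\mathcal{P}^{2,-}w(x_{0},t_{0}) = -\mathcal{P}^{2,+}(-w)(x_{0},t_{0})$ for the individual semijets and its analogue for the closures.

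\textbf{Step 1: pass to upper semicontinuous functions.} Set $w_i:=-u_i$, which lies in $\USC(\overline{\Omega_i}\times[S_i,T_i])$ because $u_i$ is lower semicontinuous. Set $\tilde\varphi:=-\varphi$, which has the same regularity as $\varphi$. Then
\[
-Z(x,t) = w_1(x_1,t_1)+\ldots+w_k(x_k,t_k)-\tilde\varphi(x,t).
\]
Since $Z$ touches $Z(x_0,t_0)$ from above for the first time at $(x_0,t_0)$, the function $-Z$ touches $-Z(x_0,t_0)$ from below for the first time at the same point. This is immediate from Definition \ref{def:first-touching-point} after reversing the inequalities.

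\textbf{Step 2: apply Corollary \ref{cor:para-max-semi}.} Apply it to the $w_i$ and $\tilde\varphi$, with $\tilde X = D^2_x\tilde\varphi(x_0,t_0) = -X$. For every $\kappa>0$ it gives matrices $Y_i\in\sym_{d_i}$ such that
\[
(-\partial_{t_i}\varphi(x_0,t_0),-\nabla_{x_i}\varphi(x_0,t_0),Y_i)\in\overline{\mathcal{P}}^{2,+}w_i(x_{0i},t_{0i})
\]
and
\[
-\left(\tfrac1\kappa+\|X\|\right)I_d\le \operatorname{diag}(Y_1,\ldots,Y_k)\le -X+\kappa X^2,
\]
using $\|\tilde X\|=\|X\|$ and $\tilde X^2=X^2$.

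\textbf{Step 3: transfer back to $u_i$ and set $X_i:=-Y_i$.} The semijet relation follows directly from Definition \ref{def:par-semijets}: multiplying the defining inequality by $-1$ turns a superjet inequality for $-u$ into a subjet inequality for $u$. This passes to the closures, because the approximating sequences $(x_n,t_n)$ with $t_{ni}\to t_{0i}^-$ and $w(x_n,t_n)\to w(x_0,t_0)$ correspond exactly to those for $u$. Hence
\[
(\partial_{t_i}\varphi(x_0,t_0),\nabla_{x_i}\varphi(x_0,t_0),X_i)\in\overline{\mathcal{P}}^{2,-}u_i(x_{0i},t_{0i}).
\]
Multiplying the matrix inequality by $-1$ reverses it and gives
\[
\left(\tfrac1\kappa+\|X\|\right)I_d\ge\operatorname{diag}(X_1,\ldots,X_k)\ge X-\kappa X^2,
\]
which is the claim.

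I expect no genuine obstacle. The only point needing care is checking that the closure $\overline{\mathcal{P}}^{2,\pm}$ respects the sign flip with the one-sided time convergence, and this is a direct unwinding of the definitions.
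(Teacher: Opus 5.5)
Your proposal is correct and takes the same approach as the paper, which obtains Corollary II from Corollary I via the relation $\mathcal{P}^{2,-}u(x_0,t_0) = -\mathcal{P}^{2,+}(-u)(x_0,t_0)$. You have written out the details the paper leaves implicit: the sign flip of $Z$ and $\varphi$, the passage of the relation to the closures under one-sided time convergence, and the reversal of the matrix inequality using $\|{-X}\| = \|X\|$ and $(-X)^2 = X^2$.
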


\section{Harnack inequalities for viscosity solutions}\label{sec:harnack-viscosity}

In this section, we demonstrate how to employ our multi-point maximum principle approach to prove Harnack inequalities satisfied by viscosity solutions of parabolic equations. We begin by discussing viscosity solutions of the linear heat equation to first illuminate the core ideas of the proof before we introduce the computational complexities encountered when dealing with the doubly nonlinear heat equation, which we will discuss in the second part of this section. We assume throughout that $\Omega \subseteq \R^d$ is a bounded, convex domain.

\subsection{The linear heat equation}\label{subsec:schroe-visc}

Here, we are concerned with viscosity solutions of the Neumann problem 
\begin{equation}\label{2025-10-21: 
1}
 \begin{cases}
   \begin{alignedat}{2}
       \partial_t u &= \Delta u&& \qquad \text{in $\Omega \times (0,T)$,} \\
        \nabla u \cdot \nu &= 0 && \qquad \text{on $\partial \Omega \times (0,T)$}
   \end{alignedat}
\end{cases}
\end{equation}
for the heat equation.

\begin{proof}[Proof of Theorem \ref{thm:Euclidean-Schroedinger-full-visc}]
Let $u$ be a positive viscosity solution of \eqref{2025-10-21: 1} and set $v=\log u$. Then $v$ is a viscosity solution of 
\begin{equation}\label{2025-10-21: 
2}
 \begin{cases}
   \begin{alignedat}{2}
       \partial_t v &= \Delta v + |\nabla v|^2&& \qquad \text{in $\Omega \times (0,T)$,} \\
        \nabla v \cdot \nu &= 0 && \qquad \text{on $\partial \Omega \times (0,T)$.}
   \end{alignedat}
\end{cases}
\end{equation}

Fix $\varepsilon > 0$ and define a function $Z$ on $\overline{\Omega} \times \overline{\Omega} \times S$ by
\[
Z(x,y;t,s)=v(x,t) - v(y,s) + \log\left(\frac{\beta(t)}{\beta(s)}\right) + \omega(x,y;t,s) + \frac{\varepsilon}{2}(t-s)^2 + \varepsilon.
\]
Identically as in the case for classical solutions, we know that $Z>0$ whenever $s$ is sufficiently small, or $s$ is sufficiently close to $t$. In particular, $Z>0$ in a small enough neighbourhood of $(0,0)$. Suppose there is a point $P_{0}:=(x_{0},y_{0};t_{0},s_{0})$ such that $Z=0$ for the first time at $P_0$ in the sense of Definition \ref{def:first-touching-point}. We first assume that $x_{0},y_{0} \in \Omega$.

Applying Corollary \ref{cor:para-max-semi-min} with $u_{1}(x,t)=v(x,t)$, $u_{2}(y,s)=-v(y,s)$, and \[\varphi(x,y;t,s)= -\log\left(\frac{\beta(t)}{\beta(s)}\right) - \omega(x,y;t,s) - \frac{\varepsilon}{2}(t-s)^2 - \varepsilon\] for $(x,y,t,s) \in \Omega \times \Omega \times (s_{0},t_{0}]\times (0,s_{0}]$, we deduce that for every $\kappa>0$, there exist matrices $X,Y \in \sym_{d}$ such that
\begin{subequations}
  \begin{align}
(\partial_{t}\varphi(P_{0}),\nabla_{x}\varphi(P_{0}),X) &\in \overline{\mathcal{P}}^{2,-}v(x_{0},t_{0})\label{2025-10-21: 
3} \\
(\partial_{s}\varphi(P_{0}),\nabla_{y}\varphi(P_{0}),-Y) &\in \overline{\mathcal{P}}^{2,-}(-v)(y_{0},s_{0})\label{2025-10-21: 
4}\\
 \left( \frac{1}{\kappa} + \|H\| \right)I_{2d} \geq \begin{pmatrix}
X  & 0 \\ 
0 & -Y
\end{pmatrix} &\geq H - \kappa H^{2},\label{2025-10-21: 
5}
\end{align}  
\end{subequations}
where $H= D^{2}_{(x,y)}\varphi(P_{0}) = -D^{2}_{(x,y)}\omega(P_{0})$. Computing the first-order derivatives of $\varphi$ explicitly, the condition \eqref{2025-10-21: 3} implies
\[
\left( -\frac{\beta'(t_{0})}{\beta(t_{0})} - \partial_{t} \omega(P_{0})- \varepsilon(t_{0}-s_{0}), -\nabla_{x}\omega(P_{0}),X \right) \in \overline{\mathcal{P}}^{2,-}v(x_{0},t_{0}).
\]
Using that $v$ is a supersolution of \eqref{2025-10-21: 2}, this implies
\[
-\frac{\beta'(t_{0})}{\beta(t_{0})} - \partial_{t} \omega(P_{0})- \varepsilon(t_{0}-s_{0}) - \trace X - |\nabla_{x}\omega(P_{0})|^{2} \geq 0.
\]
However, by \eqref{eq:schroe-omega-ineq-1-visc}, this reduces to
\begin{equation}\label{2025-10-21: 
6}
 \frac{\beta'(t_{0})}{\beta(t_{0})} + \varepsilon(t_{0}-s_{0}) + \trace X \leq 0.
\end{equation}
In a similar way, we understand \eqref{2025-10-21: 4} to mean that
\[
\left( \frac{\beta'(s_{0})}{\beta(s_{0})} - \partial_{s} \omega(P_{0})+\varepsilon(t_{0}-s_{0}), -\nabla_{y}\omega(P_{0}),-Y \right) \in \overline{\mathcal{P}}^{2,-}(-v)(y_{0},s_{0}),
\]
which is equivalent to
\[
\left( -\frac{\beta'(s_{0})}{\beta(s_{0})} + \partial_{s} \omega(P_{0})-\varepsilon(t_{0}-s_{0}), \nabla_{y}\omega(P_{0}),Y \right) \in \overline{\mathcal{P}}^{2,+}v(y_{0},s_{0}).
\]
Since $v$ is also a subsolution of \eqref{2025-10-21: 2}, we have
\[
-\frac{\beta'(s_{0})}{\beta(s_{0})} + \partial_{s} \omega(P_{0})-\varepsilon(t_{0}-s_{0})-\trace Y - |\nabla_{y}\omega(P_{0})|^{2} \leq 0,
\]
which, because of \eqref{eq:schroe-omega-ineq-2-visc} becomes
\begin{equation}\label{2025-10-21: 
7}
- \frac{\beta'(s_{0})}{\beta(s_{0})} -\varepsilon(t_{0}-s_{0})-\trace Y  \leq 0.
\end{equation}

Taking a linear combination of the inequalities \eqref{2025-10-21: 6} and \eqref{2025-10-21: 7} with the coefficients $A(t_{0})^{2}$ and $A(s_{0})^{2}$ yields
\[
\begin{split}
    0 &\geq A(t_{0})^{2} \frac{\beta'(t_{0})}{\beta(t_{0})} - A(s_{0})^{2}\frac{\beta'(s_{0})}{\beta(s_{0})}+ \varepsilon(t_{0}-s_{0})(A(t_{0})^{2}-A(s_{0})^{2}) \\& \hspace{1cm}+ A(t_{0})^{2}\trace X- A(s_{0})^{2}\trace Y.
\end{split}
\]
Analysing first the expression $A(t_{0})^{2}\trace X- A(s_{0})^{2}\trace Y$ using inequality \eqref{2025-10-21: 5}, we note that
\begin{align*}
A(t_{0})^{2}\trace X- A(s_{0})^{2}\trace Y&=\trace(A(t_{0})^{2}X - A(s_{0})^{2}Y)  \\
&= \trace \begin{pmatrix}
A(t_{0})^{2}I_{d} & A(t_{0})A(s_{0})I_{d} \\
A(t_{0})A(s_{0})I_{d} & A(s_{0})^{2}I_{d}
\end{pmatrix} \begin{pmatrix}
X & 0 \\
0 & -Y
\end{pmatrix} \\
& \geq \trace \begin{pmatrix}
A(t_{0})^{2}I_{d} & A(t_{0})A(s_{0})I_{d} \\
A(t_{0})A(s_{0})I_{d} & A(s_{0})^{2}I_{d}
\end{pmatrix} (H - \kappa H^{2}).
\end{align*}
Therefore
\[
\begin{split}
    0 &\geq A(t_{0})^{2} \frac{\beta'(t_{0})}{\beta(t_{0})} - A(s_{0})^{2}\frac{\beta'(s_{0})}{\beta(s_{0})}+ \varepsilon(t_{0}-s_{0})(A(t_{0})^{2}-A(s_{0})^{2}) \\& \hspace{1cm}+ \trace \begin{pmatrix}
A(t_{0})^{2}I_{d} & A(t_{0})A(s_{0})I_{d} \\
A(t_{0})A(s_{0})I_{d} & A(s_{0})^{2}I_{d}
\end{pmatrix} (H - \kappa H^{2})
\end{split}
\]
for all $\kappa>0$. Taking $\kappa \to 0$ and applying \eqref{eq:schroe-omega-ineq-3-visc} leads to
\begin{align*}
0 &\geq A(t_{0})^{2} \frac{\beta'(t_{0})}{\beta(t_{0})} - A(s_{0})^{2}\frac{\beta'(s_{0})}{\beta(s_{0})}+ \varepsilon(t_{0}-s_{0})(A(t_{0})^{2}-A(s_{0})^{2}) \\&\hspace{1cm}+ \trace \begin{pmatrix}
A(t_{0})^{2}I_{d} & A(t_{0})A(s_{0})I_{d} \\
A(t_{0})A(s_{0})I_{d} & A(s_{0})^{2}I_{d}
\end{pmatrix} \begin{pmatrix} -D^{2}_{x}\omega(P_{0}) & -D^{2}_{xy}\omega (P_{0}) \\
-D^{2}_{xy}\omega(P_{0}) & -D^{2}_{y}\omega(P_{0})
\end{pmatrix} \\
&= A(t_{0})^{2} \frac{\beta'(t_{0})}{\beta(t_{0})} - A(s_{0})^{2}\frac{\beta'(s_{0})}{\beta(s_{0})}+ \varepsilon(t_{0}-s_{0})(A(t_{0})^{2}-A(s_{0})^{2}) \\&\hspace{1cm}- A(t_{0})^{2}\Delta_{x}\omega(P_{0})- A(s_{0})^{2}\Delta_{y}\omega(P_{0})-2A(t_{0})A(s_{0})\Delta_{xy}\omega(P_{0}) \\
& \geq \varepsilon(t_{0}-s_{0})(A(t_{0})^{2}-A(s_{0})^{2}) >0,
\end{align*}
which is a contradiction. Therefore, if such a point $P_{0}$ exists, it must occur with at least one of $x_{0}$ or $y_{0}$ on the boundary $\partial \Omega$. We proceed in this case with an approach similar to the one applied in \cite[Section 6.2]{li-wang-mod}.

By definition of the point $P_{0}=(x_{0},y_{0};t_{0},s_{0})$, there exists $0 < \eta < t_{0}-s_{0}$ such that
\[
Z(x_{0},y_{0};t_{0},s_{0}) \leq Z(x,y;t,s)
\]
for all $(x,y;t,s) \in K:= \overline{\Omega}\times \overline{\Omega} \times [t_{0}- \eta] \times [s_{0}- \eta , s_{0}]$, that is, $Z$ attains its minimum on $K$ at $P_{0}$. We can ensure that $P_{0}$ is the unique minimiser of $Z$ on $K$ by modifying this function. We define
\[
\tilde{Z}(x,y;t,s):=Z(x,y;t,s) + |x-x_{0}|^{4} + |y-y_{0}|^{4} + |t-t_{0}|^{4} + |s-s_{0}|^{4}
\]
for all $(x,y;t,s) \in K$.

Before we can begin the main step of the argument, we first perturb the solution $v$, and in doing so, we perturb the equation satisfied by $v$ as well. Fix a point $z_{0} \in \Omega$ so that $\delta:= d(z_{0},\partial \Omega)>0$ and define
\[
h(x):= \frac{1}{2}|x-z_{0}|^{2}\qquad \text{for all $x \in \overline{\Omega}$}.
\]
Then for $\theta >0$ small, we define
\[
v^{\theta}(x,t):= v(x,t) + \theta h(x) \quad \text{and} \quad v_{\theta}(x,t):=v(x,t) - \theta h(x)
\]
for all $(x,t) \in \overline{\Omega} \times (0,T)$. A simple calculation shows that $v^{\theta}$ satisfies
\begin{equation}\label{2025-11-14: 
v-upper-prob}
 \begin{cases}
   \begin{alignedat}{2}
       \partial_t v^{\theta} - \Delta (v^{\theta} - \theta h) - |\nabla(v^{\theta} - \theta h)|^{2} &= 0&& \qquad \text{in $\Omega \times (0,T)$,} \\
        \nabla v^{\theta} \cdot \nu - \theta \nabla h \cdot \nu &= 0 && \qquad \text{on $\partial \Omega \times (0,T)$,}
   \end{alignedat}
\end{cases}
\end{equation}
and $v_{\theta}$ satisfies
\begin{equation}\label{2025-11-14: 
v-lower-prob}
 \begin{cases}
   \begin{alignedat}{2}
       \partial_t v_{\theta} - \Delta (v_{\theta} + \theta h) - |\nabla(v_{\theta} + \theta h)|^{2} &= 0&& \qquad \text{in $\Omega \times (0,T)$,} \\
        \nabla v_{\theta} \cdot \nu + \theta \nabla h \cdot \nu &= 0 && \qquad \text{on $\partial \Omega \times (0,T)$,}
   \end{alignedat}
\end{cases}
\end{equation}
both in the sense of viscosity solutions. Using the functions $v^{\theta}$ and $v_{\theta}$, we then define an approximation of $\tilde{Z}$ by setting
\[
\begin{split}
   \tilde{Z}_{\theta}(x,y;t,s)&:= v^{\theta}(x,t) - v_{\theta}(y,s) + \log \left( \frac{\beta(t)}{\beta (s)} \right) + \omega(x,y;t,s) + \frac{\varepsilon}{2}(t-s)^{2}+ \varepsilon \\&\hspace{1cm}+ |x-x_{0}|^{4} + |y-y_{0}|^{4} + |t-t_{0}|^{4} + |s-s_{0}|^{4} 
\end{split}
\]
for all $(x,y;t,s) \in K$. Since for each $\theta>0$, $\tilde{Z}_{\theta}$ is a continuous function on the compact set $K$, there exists $P_{\theta}:=(x_{\theta},y_{\theta};t_{\theta},s_{\theta}) \in K$ such that
\[
\tilde{Z}_{\theta}(P_{\theta})= \min_{P \in K}Z(P).
\]
We claim that since $\tilde{Z}_{\theta} \to \tilde{Z}$ uniformly on $K$ as $\theta \to 0$ and because $P_{0}$ is the unique minimiser of $\tilde{Z}$ in $K$, there is a sequence $(\theta _{n})_{n \geq 1}$ of positive numbers such that $P_{\theta_{n}} \to P_{0}$ as $n \to \infty$. Since $(P_{\theta})_{\theta >0}$ is contained in the bounded set $K$, there is a sequence $(\theta _{n})_{n \geq 1}$ and a point $\hat{P} \in K$ such that $P_{\theta _{n}} \to \hat{P}$ in $K$. Then $\tilde{Z}_{\theta_{n}}(P_{\theta_{n}}) \to \tilde{Z}(\hat{P})$ as $n \to \infty$ since
\begin{align*}
|\tilde{Z}_{\theta_{n}}(P_{\theta_{n}}) - \tilde{Z}( \hat{P})| &\leq | \tilde{Z}_{\theta _{n}} - \tilde{Z}(P_{\theta _{n}})| + | \tilde{Z}(P_{\theta _{n}})- \tilde{Z}( \hat{P})| \\
& \leq \|\tilde{Z}_{\theta _{n}} - \tilde{Z}\|_{\infty} + | \tilde{Z}(P_{\theta _{n}}) - \tilde{Z}(\hat{P})| \to 0
\end{align*}
because $\tilde{Z}_{\theta _{n}} \to \tilde{Z}$ uniformly as $n \to \infty$ and $\tilde{Z}$ is continuous. Now, by the definition of $P_{\theta _{n}}$, one has that
\[
\tilde{Z}_{\theta _{n}}(P_{\theta _{n}}) \leq \tilde{Z}_{\theta _{n}}(P)
\]
for all points $P \in K$. Taking $n \to \infty$ gives
\[
\tilde{Z}(\hat{P}) \leq \tilde{Z}(P)
\]
for all $P \in K$, which expresses that $\hat{P}$ minimises $Z$ over $K$. However, $\tilde{Z}$ was defined in such a way that $P_{0}$ is the unique minimiser of $\tilde{Z}$ over $K$, and $\hat{P}= P_{0}$ and $P_{\theta _{n}} \to P_{0}$ as claimed. Moreover, since $(t_{\theta _{n}},s_{\theta _{n}}) \to (t_{0},s_{0})$, we may assume the point $P_{\theta _{n}}$ does not occur with $t_{\theta _{n}} = t_{0}-\eta$ or $s_{\theta _{n}} = s_{0}-\eta$. Therefore, $\tilde{Z}_{\theta _{n}}$ attains a minimum at $P_{\theta _{n}} \in \tilde{K}:= \overline{\Omega} \times \overline{\Omega} \times (t_{0}- \eta] \times (s_{0}- \eta , s_{0}]$. Thus, we may apply the parabolic maximum principle for semicontinuous functions (Corollary \ref{cor:para-max-semi-min}) to the function
\[
\tilde{Z}_{\theta _{n}}(x,y;t,s)= v^{\theta_{n}}(x,t) - v_{\theta _{n}}(y,s) - \varphi(x,y;t,s)
\]
at $P_{\theta _{n}}$, where
\[
\begin{split}
  \varphi(x,y;t,s)&= - \log \left( \frac{\beta(t)}{\beta (s)} \right) - \omega(x,y;t,s) - \frac{\varepsilon}{2}(t-s)^{2}- \varepsilon \\& \hspace{1cm}- |x-x_{0}|^{4} - |y-y_{0}|^{4} - |t-t_{0}|^{4} - |s-s_{0}|^{4}.  
\end{split}
\]
Then for every $\kappa >0$, there are matrices $X_{n}, Y_{n}$ such that
\begin{align}
(\partial_{t}\varphi(P_{\theta _{n}}),\nabla_{x}\varphi(P_{\theta _{n}}),X_{n}) &\in \overline{\mathcal{P}}^{2,-}_{\tilde{K}}v^{\theta_{n}}(x_{\theta _{n}},t_{\theta _{n}}), \\
(-\partial_{s}\varphi(P_{\theta _{n}}),-\nabla_{y}\varphi(P_{\theta _{n}}),Y_{n}) &\in \overline{\mathcal{P}}^{2,+}_{\tilde{K}}v_{\theta _{n}}(y_{\theta _{n}},s_{\theta _{n}}),\\
 \left( \frac{1}{\kappa} + \|H_{n}\| \right)I_{2d} \geq \begin{pmatrix}
X_{n}  & 0 \label{eq:matrix-ineq-n}\\ 
0 & -Y_{n}
\end{pmatrix} &\geq H_{n} - \kappa H_{n}^{2},
\end{align}
where $H_{n} = D^{2}_{(x,y)}\varphi(P_{\theta_{n}})$. We now use that $v^{\theta _{n}}$ is a viscosity supersolution of \eqref{2025-11-14: v-upper-prob}. If $x_{\theta _{n}} \in \Omega$, we have
\begin{equation}\label{2025-11-14: 
positive-thing-I}
\begin{split}
    Q_{n}&:= - \frac{\beta'(t_{\theta _{n}})}{\beta(t_{\theta _{n}})} - \partial_{t} \omega(P_{\theta _{n}}) - \varepsilon(t_{\theta _{n}} - s_{\theta _{n}}) - 4|t_{\theta _{n}} - t_{0}|^{2}(t_{\theta _{n}} - t_{0}) - \trace X_{n} \\&\hspace{1cm}- |-\nabla_{x}\omega(P_{\theta _{n}}) - 4|x_{\theta _{n}} - x_{0}|^{2}(x_{\theta _{n}} - x_{0}) -\theta _{n}\nabla h(x_{\theta _{n}})|^{2}\\&\hspace{1cm} + \theta _{n} \Delta h(x_{\theta _{n}}) \geq 0,
\end{split}
\end{equation}
whereas if $x_{\theta _{n}} \in \partial \Omega$, we have
\begin{equation}\label{2025-11-14: 
max}
 \max \left\{\begin{aligned}
     Q_{n}, &- \nabla_{x} \omega(P_{\theta _{n}})\cdot \nu(x_{\theta _{n}}) - 4|x_{\theta _{n}} - x_{0}|^{2}(x_{\theta _{n}} - x_{0})\cdot \nu(x_{\theta _{n}}) \\&\hspace{1cm}- \theta _{n} \nabla h(x_{\theta_{n}})\cdot \nu(x_{\theta_{n}})
 \end{aligned}   \right\} \geq 0.
\end{equation}
However, $\nabla_{x} \omega(P_{\theta _{n}})\cdot \nu(x_{\theta _{n}}) \geq 0$ by assumption, $(x_{\theta _{n}} - x_{0})\cdot \nu(x_{\theta _{n}}) \geq 0$ since $\Omega$ is convex, and finally
\[
\nabla h(x_{\theta_{n}})\cdot \nu(x_{\theta_{n}}) = (x_{\theta _{n}} - z_{0})\cdot \nu(x_{\theta _{n}}) \geq d(z_{0},\partial \Omega)= \delta.
\]
Therefore
\[
- \nabla_{x} \omega(P_{\theta _{n}})\cdot \nu(x_{\theta _{n}}) - 4|x_{\theta _{n}} - x_{0}|^{2}(x_{\theta _{n}} - x_{0})\cdot \nu(x_{\theta _{n}}) - \theta _{n} \nabla h(x_{\theta_{n}})\cdot \nu(x_{\theta_{n}}) \leq - \theta _{n}\delta <0.
\]
Hence, in order for \eqref{2025-11-14: max} to be satisfied, we must have that $Q_{n} \geq 0$, and so \eqref{2025-11-14: positive-thing-I} holds regardless of whether $x_{\theta _{n}} \in \Omega$ or $x_{\theta _{n}} \in \partial \Omega$. Applying a similar argument using that $v_{\theta _{n}}$ is a viscosity subsolution of \eqref{2025-11-14: v-lower-prob}, we find that
\begin{equation}\label{2025-11-14: 
positive-thing-II}
\begin{split}
    &\frac{\beta'(s_{\theta _{n}})}{\beta(s_{\theta _{n}})} - \partial_{s} \omega(P_{\theta _{n}}) + \varepsilon(t_{\theta _{n}} - s_{\theta _{n}}) - 4|s_{\theta _{n}} - s_{0}|^{2}(s_{\theta _{n}} - s_{0}) + \trace Y_{n} \\&\hspace{1cm}+ |-\nabla_{y}\omega(P_{\theta _{n}}) - 4|y_{\theta _{n}} - y_{0}|^{2}(y_{\theta _{n}} - y_{0}) +\theta _{n}\nabla h(y_{\theta _{n}})|^{2}\\&\hspace{1cm} - \theta _{n} \Delta h(y_{\theta _{n}}) \geq 0. 
\end{split}
\end{equation}
Seeking to take $n \to \infty$ in \eqref{eq:matrix-ineq-n}, \eqref{2025-11-14: positive-thing-I}, and \eqref{2025-11-14: positive-thing-II}, we note that
\begin{align*}
H_{n}&= D^{2}_{(x,y)}\varphi(P_{\theta_{n}}) \\&= - D^{2}_{(x,y)}\omega(P_{\theta_{n}}) -4\begin{pmatrix}
2(x_{\theta _{n}} - x_{0})(x_{\theta _{n}} - x_{0})^{\top} & 0 \\
0&2(y_{\theta _{n}} - y_{0})(y_{\theta _{n}} - y_{0})^{\top}
\end{pmatrix} \\& \hspace{1cm}-4 \begin{pmatrix}
|x_{\theta _{n}} - x_{0}|^{2}I_{d} & 0 \\
0&|y_{\theta _{n}} - y_{0}|^{2}I_{d}
\end{pmatrix}
\end{align*}
converges to $D^{2}_{(x,y)}\omega(P_{0})$ as $n \to \infty$, the sequences $(X_{n})_{n \geq 1}$ and $(Y_{n})_{n \geq 1}$ converge, possibly after taking a subsequence, because of \eqref{eq:matrix-ineq-n}, to some matrices \linebreak $X,Y \in \sym_{d}$ respectively. It follows that
\begin{align*}
-\frac{\beta'(t_{0})}{\beta(t_{0})} - \partial_{t} \omega(P_{0})- \varepsilon(t_{0}-s_{0}) - \trace X - |\nabla_{x}\omega(P_{0})|^{2} &\geq 0 \\
\frac{\beta'(s_{0})}{\beta(s_{0})} - \partial_{s} \omega(P_{0})+\varepsilon(t_{0}-s_{0})+\trace Y + |\nabla_{y}\omega(P_{0})|^{2} &\geq 0 \\
\left( \frac{1}{\kappa} + \|H\| \right)I_{2d} \geq \begin{pmatrix}X  & 0 \\ 0 & -Y \end{pmatrix} \geq H - \kappa H^{2}&
\end{align*}
hold with $H = -D^{2}_{(x,y)}\omega(P_{0})$ as before. Therefore, we may continue using the same argument as when $x_{0},y_{0} \not\in \partial \Omega$ to reach a contradiction once again. Thus, no such point $P_{0}$ exists and the proof is complete.

\end{proof}

\subsection{The doubly nonlinear heat equation}\label{subsec:dne-visc}
We now return to the Neumann problem for the doubly nonlinear heat equation with $m>0$ and $p \geq 2$, with emphasis on the case $\lambda > 0$. Our proof of Theorem \ref{thm:dne-viscosity} proceeds in a similar fashion to the proof of Theorem \ref{thm:Euclidean-Schroedinger-full-visc}, except we alter the definition of the function $Z$, so that for any positive (viscosity) solution $u$ of \eqref{2025-10-27: 1} we define $Z$ by
\[
Z(x,y;t,s):= \beta^{\lambda}(t) v(x,t) - \beta^{\lambda}(s) v(y,s) + \omega(x,y;t,s) + \frac{\varepsilon}{2}(t-s)^2 + \varepsilon,
\]
where $v= \frac{m}{\lambda}u^{\lambda}$. However, our application of the parabolic maximum principle for semicontinuous functions (Corollary \ref{cor:para-max-semi-min}) then only initially provides elements of $\overline{\mathcal{P}}^{2,\pm}(\beta^{\lambda}v)$ instead of $\overline{\mathcal{P}}^{2,\pm}v$. Therefore, we must first understand the following ``product rule'' type results.

\begin{lemma}[Product rule for parabolic semijets]\label{lem:product-rule}
Let $u \in \USC(\Omega \times(S_i,T_i))$ or $u \in \LSC(\Omega \times(S_i,T_i))$, and $\mu:[0,\infty) \to [0,\infty)$. 
\begin{enumerate}
    \item If $\mu$ is differentiable at $t_{0} \in (S_i,T_i)$ and $(a,\xi,X) \in \mathcal{P}^{2,\pm}u(x_{0},t_{0})$ for   $(x_{0},t_{0}) \in \Omega \times(S_i,T_i)$, then 
\[
(a \mu(t_{0}) + \mu'(t_{0})u(x_{0},t_{0}),\mu(t_{0})\xi,\mu(t_{0})X) \in \mathcal{P}^{2,\pm}(\mu(t_{0})u(x_{0},t_{0})).
\]
\item If $\mu$ is continuously differentiable and  $(a,\xi,X) \in \overline{\mathcal{P}}^{2,\pm}u(x_{0},t_{0})$ for some $(x_{0},t_{0}) \in \Omega \times(0,T)$, then 
\[
(a \mu(t_{0}) + \mu'(t_{0})u(x_{0},t_{0}),\mu(t_{0})\xi,\mu(t_{0})X) \in \overline{\mathcal{P}}^{2,\pm}(\mu(t_{0})u(x_{0},t_{0})).
\]
\item If $\mu$ is continuously differentiable,   $(a,\xi,X) \in \overline{\mathcal{P}}^{2,\pm}(\mu(x_{0},t_{0})u(x_{0},t_{0}))$ and $\mu(t_{0}) \ne 0$, then 
\[
\left( \frac{a}{\mu(t_{0})}-u(x_{0},t_{0}) \frac{\mu'(t_{0})}{\mu(t_{0})}, \frac{1}{\mu(t_{0})}\xi, \frac{1}{\mu(t_{0})}X \right) \in \overline{\mathcal{P}}^{2,\pm }u(x_{0},t_{0}).
\]
\end{enumerate}
\end{lemma}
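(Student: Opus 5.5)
The plan is to prove (i) directly from Definition \ref{def:par-semijets} by a one-dimensional Taylor expansion of $\mu$. Then (ii) follows from (i) by approximation, and (iii) follows from (ii) applied with $1/\mu$. Throughout I read the notation $\mathcal{P}^{2,\pm}(\mu(t_0)u(x_0,t_0))$ as the semijet of the function $w(x,t):=\mu(t)u(x,t)$ at $(x_0,t_0)$.

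\emph{Part (i).} Take the superjet case; the subjet case is the same with reversed inequalities. Suppose $(a,\xi,X)\in\mathcal{P}^{2,+}u(x_0,t_0)$. Differentiability of $\mu$ at $t_0$ gives $\mu(t)=\mu(t_0)+\mu'(t_0)(t-t_0)+o(t_0-t)$ as $t\to t_0^-$. Since $\mu(t)\ge 0$, multiplying the superjet inequality for $u$ by $\mu(t)$ preserves its direction, and we get
\[
\mu(t)u(x,t)\le \mu(t)\Big(u(x_0,t_0)+a(t-t_0)+\langle\xi,x-x_0\rangle+\tfrac12\langle X(x-x_0),x-x_0\rangle+o(t_0-t+|x-x_0|^2)\Big).
\]
Substitute the expansion of $\mu(t)$ into the right-hand side. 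The cross terms $\mu'(t_0)(t-t_0)\big(a(t-t_0)+\langle\xi,x-x_0\rangle+\dots\big)$ are $o(t_0-t+|x-x_0|^2)$, because $|t-t_0||x-x_0|\le \tfrac12\big(|t-t_0|^2+|x-x_0|^2\big)$. The term $u(x_0,t_0)\,o(t_0-t)$ is harmless since $u(x_0,t_0)$ is finite. Collecting terms leaves exactly the expansion with coefficients $(a\mu(t_0)+\mu'(t_0)u(x_0,t_0),\ \mu(t_0)\xi,\ \mu(t_0)X)$.

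\emph{Part (ii).} Take $(a_n,\xi_n,X_n)\in\mathcal{P}^{2,\pm}u(x_n,t_n)$ with $(a_n,\xi_n,X_n)\to(a,\xi,X)$, $x_n\to x_0$, $t_n\to t_0^-$ and $u(x_n,t_n)\to u(x_0,t_0)$. Part (i) at $(x_n,t_n)$ gives $(a_n\mu(t_n)+\mu'(t_n)u(x_n,t_n),\mu(t_n)\xi_n,\mu(t_n)X_n)\in\mathcal{P}^{2,\pm}w(x_n,t_n)$. By continuity of $\mu$ and $\mu'$, these triples converge to the claimed limit, and $w(x_n,t_n)\to w(x_0,t_0)$. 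This is exactly membership in the closure $\overline{\mathcal{P}}^{2,\pm}w(x_0,t_0)$.

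\emph{Part (iii).} Here I expect the main obstacle: part (i) used $\mu\ge0$ to multiply inequalities, and $1/\mu$ is only defined where $\mu\neq0$. Since $\mu(t_0)\ne0$ and $\mu\ge0$ is continuous, $\mu>0$ on a neighbourhood of $t_0$. Semijets are local, so we may replace $\mu$ by a $C^1$ nonnegative function $\tilde\mu$ that agrees with $\mu$ near $t_0$ and is bounded away from zero. Also, $w$ is semicontinuous of the same type as $u$ near $t_0$, because $\mu>0$ there. We then apply (ii) to $w$ with the multiplier $1/\tilde\mu$, which is $C^1$, positive, and satisfies $(1/\tilde\mu)'=-\mu'/\mu^2$ near $t_0$. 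This yields the triple $\big(a/\mu(t_0)-\mu'(t_0)w(x_0,t_0)/\mu(t_0)^2,\ \xi/\mu(t_0),\ X/\mu(t_0)\big)$ in $\overline{\mathcal{P}}^{2,\pm}u(x_0,t_0)$. Since $w(x_0,t_0)=\mu(t_0)u(x_0,t_0)$, the time component equals $a/\mu(t_0)-u(x_0,t_0)\mu'(t_0)/\mu(t_0)$, which is the stated formula.

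The approximating points $t_n\to t_0^-$ in (ii) eventually lie in the neighbourhood where $\mu>0$, so (i) applies to them and the localisation causes no trouble.
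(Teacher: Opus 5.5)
Your proposal is correct and follows the paper's proof step for step. Part (i) multiplies the jet inequality by $\mu(t)\ge 0$ and inserts the first-order expansion of $\mu$. Part (ii) applies (i) along the approximating sequence and uses continuity of $\mu$ and $\mu'$. Part (iii) applies (ii) with $1/\mu$, and your localisation there, to a nonnegative $C^1$ function bounded away from zero, is more careful than the paper's one-line argument.

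One justification in (i) needs correcting. The bound $|t-t_0||x-x_0|\le\frac12(|t-t_0|^2+|x-x_0|^2)$ does not show the cross term is $o(t_0-t+|x-x_0|^2)$, because the piece $\frac12|x-x_0|^2$ is not. Argue instead that $|t-t_0||x-x_0|=o(t_0-t)$, since $|x-x_0|\to 0$.
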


\begin{proof}
\begin{enumerate}    
\item[]
\item We give the details of the proof for $\mathcal{P}^{2+}u$ and the claim about $\mathcal{P}^{2,-}u$ follows similarly. Since $\mu$ is differentiable at $t_{0}$,
\[
\mu(t) = \mu(t_{0}) + \mu'(t_{0})(t-t_{0}) + o(|t-t_{0}|)
\]
as $t \to t_{0}$, and since $(a,\xi,X) \in \mathcal{P}^{2,+}u(x_{0},t_{0})$,
\[
\begin{split}
u(x,t) &\leq u(x_{0},t_{0}) + a(t-t_{0}) + \langle \xi ,x-x_{0}\rangle\\&\hspace{1cm}+ \frac{1}{2} \langle X(x-x_{0}),x-x_{0} \rangle + o(t_{0}-t + |x-x_{0}|^{2})
\end{split}
\]
as $x\to x_{0}$ and $t \to t_{0}^{-}$.
Since $\mu \geq 0$, it follows that
\begin{align*}
\hspace{1.5cm}\mu(t)u(x,t) &\leq (\mu(t_{0}) + \mu'(t_{0})(t-t_{0}) + o(t_{0}-t))\times\\&\hspace{1cm}\bigg(  u(x_{0},t_{0}) + a(t-t_{0}) + \langle \xi , x-x_{0}\rangle\\&\hspace{1.5cm}+ \frac{1}{2} \langle X(x-x_{0}),x-x_{0} \rangle + o(t_{0}-t + |x-x_{0}|^{2}) \bigg) \\
& = \mu(t_{0})u(x_{0},t_{0}) + (a\mu(t_{0}) + \mu'(t_{0})u(x_{0},t_{0}))(t-t_{0}) \\&\hspace{1cm}+ \mu(t_{0}) \langle \xi ,x-x_{0}\rangle+ \frac{1}{2}\langle \mu(t_{0})X(x-x_{0}),(x-x_{0}) \rangle \\&\hspace{1cm}+ o(|t_0 - t +|x-x_{0}|^{2})
\end{align*}
as $x\to x_{0}$ and $t \to t_{0}^{-}$ and so the claim is true.
\item 
By the definition of $(a,\xi,X) \in \overline{\mathcal{P}}^{2,\pm}u(x_{0},t_{0})$, one can find sequences $(a_{n},\xi_n,X_n)_{n \geq 1} \subseteq \R \times \R^{d} \times \sym_{d}$ and $(x_{n},t_{n})_{n \geq 1} \subseteq \Omega \times (0,T)$  such that $t_{n} \leq t_{0}$ for $n \geq 1$, $(a_{n},\xi_n,X_n) \in \mathcal{P}^{2,\pm}u(x_n,t_{n})$, $(a_{n},\xi_{n},X_{n}) \to (a,\xi,X)$, $(x_{n},t_{n}) \to (x,t)$ and $u(x_{n},t_{n}) \to u(x,t)$ as $n \to \infty$. By part (i),
\[
\hspace{1.5cm}(a_{n} \mu(t_{n}) + \mu'(t_{n})u(x_{n},t_{n}),\mu(t_{n})\xi_{n},\mu(t_{n})X_{n}) \in \mathcal{P}^{2,\pm}(\mu(t_{n})u(x_{n},t_{n})).
\]
In addition to the convergences we have already assumed, since $\mu$ is continuously differentiable, we have $\mu(t_{n}) \to \mu(t_{0})$ and $\mu'(t_{n}) \to \mu'(t_{0})$ as $n \to \infty$. The claim now follows immediately.
\item The statement follows immediately by writing \[u(x_{0},t_{0}) = \frac{1}{\mu(t_{0})} (\mu(t_{0})u(x_{0},t_{0}))\] and applying part (ii).
\end{enumerate}
\end{proof}

\begin{proof}[Proof of Theorem \ref{thm:dne-viscosity}]

We only give a detailed proof in the case $\lambda >0$. Following this, we briefly describe the necessary modifications to adapt the proof to the cases $\lambda =0$ and $\lambda < 0$.

\emph{The $\lambda > 0$ case:}
    
Let $u$ be a positive viscosity solution of \eqref{2025-10-27: 1} and set $v= \frac{m}{\lambda}u^{\lambda}$. Then, as already shown, $v$ is a viscosity solution of \eqref{2025-10-27: 2}.

Fix $\varepsilon > 0$ and define a function $Z$ on $\overline{\Omega} \times \overline{\Omega} \times S$ by
\[
Z(x,y;t,s)=\beta^{\lambda}(t)v(x,t) - \beta^{\lambda}(s)v(y,s) + \omega(x,y;t,s) + \frac{\varepsilon}{2}(t-s)^2 + \varepsilon.
\]
Identically as in the case for classical solutions, we know that $Z>0$ whenever $s$ is sufficiently small, or $s$ is sufficiently close to $t$. In particular, $Z>0$ in a small enough neighbourhood of $(0,0)$. Suppose there is a point $P_{0}:=(x_{0},y_{0};t_{0},s_{0})$ such that $Z=0$ for the first time at $P_0$ in the sense of Definition \ref{def:first-touching-point}. We first assume that $x_{0},y_{0} \in \Omega$.

Applying Corollary \ref{cor:para-max-semi-min} with $u_{1}(x,t)=\beta^{\lambda}(t)v(x,t)$, $u_{2}(y,s)=-\beta^{\lambda}(s)v(y,s)$, and $\varphi= - \omega(x,y;t,s) - \frac{\varepsilon}{2}(t-s)^2 - \varepsilon$ for $(x,y,t,s) \in \Omega \times \Omega \times (s_{0},t_{0}]\times (0,s_{0}]$, we deduce that for every $\kappa>0$, there exist matrices $X,Y \in \sym_{d}$ such that
\begin{align}
(\partial_{t}\varphi(P_{0}),\nabla_{x}\varphi(P_{0}),X) &\in \overline{\mathcal{P}}^{2,-}(\beta^{\lambda}v)(x_{0},t_{0}),\label{2025-10-27: 
3} \\
(\partial_{s}\varphi(P_{0}),\nabla_{y}\varphi(P_{0}),-Y) &\in \overline{\mathcal{P}}^{2,-}(-\beta^{\lambda}v)(y_{0},s_{0}),\label{2025-10-27: 
4}
\end{align}
and
\begin{equation}\label{2025-10-27: 
5}
 \left( \frac{1}{\kappa} + \|A\| \right)I_{2d} \geq \begin{pmatrix}
X  & 0 \\ 
0 & -Y
\end{pmatrix} \geq H - \kappa H^{2},
\end{equation}
where $H= D^{2}_{(x,y)}\varphi(P_{0}) = -D^{2}_{(x,y)}\omega(P_{0})$. Computing the first-order derivatives of $\varphi$ explicitly and applying the product rule in Lemma \ref{lem:product-rule} (iii), the condition \eqref{2025-10-27: 3} implies
\[
\begin{split}
   &\bigg( \frac{- \partial_{t} \omega(P_{0})- \varepsilon(t_{0}-s_{0})}{\beta^{\lambda}(t_{0})} -\lambda v(x_{0},t_{0}) \frac{\beta'(t_{0})}{\beta(t_{0})},\\&\hspace{3cm} - \frac{1}{\beta^{\lambda}(t_{0})}\nabla_{x}\omega(P_{0}),\frac{1}{\beta^{\lambda}(t_{0})}X \bigg) \in \overline{\mathcal{P}}^{2,-}v(x_{0},t_{0}). 
\end{split}
\]
Using that $v$ is a supersolution of \eqref{2025-10-27: 2}, this implies
\[
\begin{split}
    0 &\leq \frac{- \partial_{t} \omega(P_{0})- \varepsilon(t_{0}-s_{0})}{\beta^{\lambda}(t_{0})} -\lambda v(x_{0},t_{0}) \frac{\beta'(t_{0})}{\beta(t_{0})} \\&\hspace{1cm}- \lambda v(x_{0},t_{0})\beta^{-\lambda(p-1)}(t_{0})|\nabla_{x}\omega(P_{0})|^{p-4}\times\\&\hspace{2cm}(|\nabla_{x}\omega(P_{0})|^{2}\trace X + (p-2)\nabla_{x}\omega (P_{0})^{\top}X \nabla_{x}\omega(P_{0})) \\&\hspace{1cm}- \beta^{-\lambda p}(t_{0})|\nabla_{x} \omega(P_{0})|^{p}.
\end{split}
\]
However, since $\beta^{\lambda}(t_{0}) >0$, we may apply \eqref{eq:dne-euclidean-omega-ineq-1-visc} to obtain
\begin{equation}\label{2025-10-27: 
6}
\begin{split}
0 &\leq - \frac{\varepsilon(t_{0}-s_{0})}{\lambda v(x_{0},t_{0})} - \beta^{\lambda-1}(t_{0})\beta'(t_{0}) \\&\hspace{0.5cm}- \beta^{-\lambda(p-2)}(t_{0})|\nabla_{x}\omega(P_{0})|^{p-4}\times \\&\hspace{1.5cm}(|\nabla_{x}\omega(P_{0})|^{2}\trace X + (p-2)\nabla_{x}\omega (P_{0})^{\top}X \nabla_{x}\omega(P_{0})).
\end{split}
\end{equation}
In a similar way, one may derive from \eqref{2025-10-27: 4} using the product rule, that
\[
\begin{split}
    &\bigg( \frac{\partial_{s} \omega(P_{0})- \varepsilon(t_{0}-s_{0})}{\beta^{\lambda}(s_{0})} -\lambda v(y_{0},s_{0}) \frac{\beta'(s_{0})}{\beta(s_{0})}, \\&\hspace{3cm}\frac{1}{\beta^{\lambda}(s_{0})}\nabla_{y}\omega(P_{0}),\frac{1}{\beta^{\lambda}(s_{0})}Y \bigg) \in \overline{\mathcal{P}}^{2,+}v(y_{0},s_{0}).
\end{split}
\]
Since $v$ is also a subsolution of \eqref{2025-10-27: 2}, we have
\[
\begin{split}
   0 &\geq \frac{\partial_{s} \omega(P_{0})- \varepsilon(t_{0}-s_{0})}{\beta^{\lambda}(s_{0})} -\lambda v(y_{0},s_{0}) \frac{\beta'(s_{0})}{\beta(s_{0})} \\&\hspace{1cm}- \lambda v(y_{0},s_{0})\beta^{-\lambda(p-1)}(s_{0})|\nabla_{y}\omega(P_{0})|^{p-4}\times \\&\hspace{2cm}(|\nabla_{y}\omega(P_{0})|^{2}\trace Y + (p-2)\nabla_{y}\omega (P_{0})^{\top}Y \nabla_{y}\omega(P_{0})) \\& \hspace{1cm}- \beta^{-\lambda p}(s_{0})|\nabla_{y} \omega(P_{0})|^{p}, 
\end{split}
\]
which because of \eqref{eq:dne-euclidean-omega-ineq-2-visc} becomes
\begin{equation}\label{2025-10-27: 
7}
\begin{split}
    0 &\geq - \frac{\varepsilon(t_{0}-s_{0})}{\lambda v(y_{0},s_{0})} - \beta^{\lambda-1}(s_{0})\beta'(s_{0}) \\&\hspace{1cm}- \beta^{-\lambda(p-2)}(s_{0})|\nabla_{y}\omega(P_{0})|^{p-4}\times \\&\hspace{2cm}(|\nabla_{y}\omega(P_{0})|^{2}\trace Y + (p-2)\nabla_{y}\omega (P_{0})^{\top}Y \nabla_{y}\omega(P_{0})).
\end{split}
\end{equation}

Taking a linear combination of the inequalities \eqref{2025-10-27: 6} and \eqref{2025-10-27: 7} with the coefficients $A(t_{0})^{2}$ and $A(s_{0})^{2}$ yields
\begin{equation}\label{2025-10-28: 
8}
\begin{split}
    0 &\geq A(t_{0})^{2} \beta^{\lambda}(t_{0})\beta'(t_{0}) - A(s_{0})^{2}\beta^{\lambda}(s_{0})\beta'(s_{0})\\&\hspace{1cm}+ \frac{\varepsilon}{\lambda}(t_{0}-s_{0})\left( \frac{A(t_{0})^{2}}{v(x_{0},t_{0})}-\frac{A(s_{0})^{2}}{v(y_{0},s_{0})} \right) \\&\hspace{1cm}+ A(t_{0})^{2}\beta^{-\lambda(p-2)}(t_{0})|\nabla_{x}\omega(P_{0})|^{p-4}\times\\&\hspace{2cm}(|\nabla_{x}\omega(P_{0})|^{2}\trace X + (p-2)\nabla_{x}\omega (P_{0})^{\top}X \nabla_{x}\omega(P_{0}))\\&\hspace{1cm}- A(s_{0})^{2}\beta^{-\lambda(p-2)}(s_{0})|\nabla_{y}\omega(P_{0})|^{p-4}\times \\&\hspace{2cm}(|\nabla_{y}\omega(P_{0})|^{2}\trace Y + (p-2)\nabla_{y}\omega (P_{0})^{\top}Y \nabla_{y}\omega(P_{0})). 
\end{split}
\end{equation}
We first work to estimate the quantity
\begin{align*}
Q&:= A(t_{0})^{2}\beta^{-\lambda(p-2)}(t_{0})|\nabla_{x}\omega(P_{0})|^{p-4}\times \\&\hspace{2cm}(|\nabla_{x}\omega(P_{0})|^{2}\trace X + (p-2)\nabla_{x}\omega (P_{0})^{\top}X \nabla_{x}\omega(P_{0}))\\&\hspace{1cm}- A(s_{0})^{2}\beta^{-\lambda(p-2)}(s_{0})|\nabla_{y}\omega(P_{0})|^{p-4}\times \\&\hspace{2cm}(|\nabla_{y}\omega(P_{0})|^{2}\trace Y + (p-2)\nabla_{y}\omega (P_{0})^{\top}Y \nabla_{y}\omega(P_{0})) \\
&= \trace \Big(A(t_{0})^{2}\beta^{-\lambda(p-2)}(t_{0})|\nabla_{x}\omega(P_{0})|^{p-2}X - A(s_{0})^{2}\beta^{-\lambda(p-2)}(s_{0})|\nabla_{y}\omega(P_{0})|^{p-2}Y\Big) \\&\hspace{1cm}+ (p-2) \Big(A(t_{0})^{2}\beta^{-\lambda(p-2)}(t_{0})|\nabla_{x}\omega(P_{0})|^{p-4}\nabla_{x}\omega (P_{0})^{\top}X \nabla_{x}\omega(P_{0}) \\&\hspace{2cm}- A(s_{0})^{2}\beta^{-\lambda(p-2)}(s_{0})|\nabla_{y}\omega(P_{0})|^{p-4}\nabla_{y}\omega (P_{0})^{\top}Y \nabla_{y}\omega(P_{0}))\Big) \\
\end{align*}
Setting
\begin{align*}
F&:= \begin{pmatrix}
A(t_{0})^{2}\beta^{-\lambda(p-2)}(t_{0})|\nabla_{x}\omega(P_{0})|^{p-2}I_{d} & 0 \\
0  & A(s_{0})^{2}\beta^{-\lambda(p-2)}(s_{0})|\nabla_{y}\omega(P_{0})|^{p-2}I_{d}
\end{pmatrix}, \\
G&:= \begin{pmatrix}
X & 0 \\
0 & -Y
\end{pmatrix}, \\
\zeta&:= \begin{pmatrix}
A(t_{0}) \beta^{-\frac{\lambda(p-2)}{2}}(t_{0})|\nabla_{x}\omega(P_{0})|^{\frac{p-4}{2}}\nabla_{x}\omega(P_{0}) \\
A(s_{0}) \beta^{-\frac{\lambda(p-2)}{2}}(s_{0})|\nabla_{y}\omega(P_{0})|^{\frac{p-4}{2}}\nabla_{y}\omega(P_{0})
\end{pmatrix},
\end{align*}
we may express $Q$ by
\[
Q=\trace(FG) + (p-2)\zeta^\top G \zeta.
\]
If we set
\[
\tilde{F}:= \begin{pmatrix}
    F_{11} & F_{12} \\ F_{12} & F_{22}
\end{pmatrix},
\]
where
\begin{align*}
    F_{11}&:= A^{2}(t_{0})\beta^{-\lambda(p-2)}(t_{0})|\nabla_x \omega(P_{0})|^{p-2} I_{d},\\
    F_{12}&:= A(t_{0})A(s_{0})\beta^{-\frac{\lambda(p-2)}{2}}(t_{0})\beta^{-\frac{\lambda(p-2)}{2}}(s_{0})|\nabla_x \omega(x_{0},t_{0})|^{\frac{p-2}{2}}|\nabla_y \omega(y_{0},s_{0})|^{\frac{p-2}{2}}I_{d},\\
    F_{22}&:= A^{2}(s_{0})\beta^{-\lambda(p-2)}(s_{0})|\nabla_y \omega(P_{0})|^{p-2} I_{d},
\end{align*}
then it is equivalent to write
\[
Q=\trace(\tilde{F}G) + (p-2)\zeta^\top G \zeta.
\]
According to \eqref{2025-10-27: 5}, we have $G \geq -D^{2}_{(x,y)}\omega(P_{0}) - \kappa(D^{2}_{(x,y)}\omega(P_{0}))^{2}$, and therefore
\[
\begin{split}
    Q &\geq -\trace(\tilde{F}(D^{2}_{(x,y)}\omega(P_{0}) + \kappa(D^{2}_{(x,y)}\omega(P_{0}))^{2})) \\&\hspace{1cm}- (p-2)\zeta^{\top}(D^{2}_{(x,y)}\omega(P_{0}) + \kappa(D^{2}_{(x,y)}\omega(P_{0}))^{2})\zeta
\end{split}
\]
We may now take $\kappa \to 0$ to obtain
\[
Q \geq -\trace(\tilde{F}(D^{2}_{(x,y)}\omega(P_{0}) ) - (p-2)\zeta^{\top}D^{2}_{(x,y)}\omega(P_{0}) \zeta.
\]
Inserting this in \eqref{2025-10-28: 8} with the products written out and using \eqref{eq:dne-euclidean-omega-ineq-3-visc}, we have
\begin{align*}
0 &\geq A(t_{0})^{2} \beta^{\lambda}(t_{0})\beta'(t_{0}) - A(s_{0})^{2}\beta^{\lambda}(s_{0})\beta'(s_{0})\\&\hspace{1cm}+ \frac{\varepsilon}{\lambda}(t_{0}-s_{0})\left( \frac{A(t_{0})^{2}}{v(x_{0},t_{0})}-\frac{A(s_{0})^{2}}{v(y_{0},s_{0})} \right) \\&\hspace{1cm}-A^{2}(t_{0})\beta^{-\lambda(p-2)}(t_{0})\Delta_{p,x}\omega(P_{0}) - A^{2}(s_{0}) \beta^{-\lambda(p-2)}(s_{0})\Delta_{p,y}\omega(P_{0})  \\& \hspace{1cm}-2A(t_{0})A(s_{0})\beta^{-\frac{\lambda(p-2)}{2}}(t_{0})\beta^{-\frac{\lambda(p-2)}{2}}(s_{0})\Delta_{p,xy}\omega(P_{0})  \\
& \geq \frac{\varepsilon}{\lambda}(t_{0}-s_{0})\left( \frac{A(t_{0})^{2}}{v(x_{0},t_{0})}-\frac{A(s_{0})^{2}}{v(y_{0},s_{0})} \right)>0,
\end{align*}
which is a contradiction, as before. Therefore, if such a point $P_{0}$ exists, it must occur with at least one of $x_{0}$ or $y_{0}$ on the boundary $\partial \Omega$.

We now make no assumption regarding whether $x_{0},y_{0} \in \Omega$ or $x_{0},y_{0} \in \partial \Omega$. Proceeding in the same manner as in the proof of Theorem \ref{thm:Euclidean-Schroedinger-full-visc}, we choose a point $z_{0} \in \Omega$ with $\delta:=d(z_{0},\partial \Omega) >0$ and define
\[
h(x):= \frac{1}{2} |x-z_{0}|^{2}
\]
for all $x \in \overline{\Omega}$. Letting
\[
v^{\theta}(x,t):= v(x,t) + \theta h(x), \quad \text{and} \quad v_{\theta}(x,t):=v(x,t) - \theta h(x)
\]
for $(x,t) \in \overline{\Omega} \times (0,T)$ and $\theta>0$ small, we have that $v^{\theta}$ satisfies
\begin{equation}\label{2025-11-14: 
v-upper-prob-dne}
 \begin{cases}
   \begin{alignedat}{2}
       \partial_t v^{\theta} - \lambda( v^{\theta} - \theta h)\Delta_{p} (v^{\theta} - \theta h) - |\nabla(v^{\theta} - \theta h)|^{p} &= 0&& \qquad \text{in $\Omega \times (0,T)$,} \\
        |\nabla v^{\theta} -\theta _{n} \nabla h|^{p-2} (\nabla v^{\theta} -\theta _{n}\nabla h)\cdot \nu(x_{\theta _{n}}) &= 0 && \qquad \text{on $\partial \Omega \times (0,T)$,}
   \end{alignedat}
\end{cases}
\end{equation}
and $v_{\theta}$ satisfies
\begin{equation}\label{2025-11-14: 
v-lower-prob-dne}
 \begin{cases}
   \begin{alignedat}{2}
       \partial_t v^{\theta} - \lambda( v^{\theta} + \theta h)\Delta_{p} (v^{\theta} + \theta h) - |\nabla(v^{\theta} + \theta h)|^{p} &= 0&& \qquad \text{in $\Omega \times (0,T)$,} \\
        |\nabla v^{\theta} +\theta _{n} \nabla h|^{p-2} (\nabla v^{\theta} +\theta _{n}\nabla h)\cdot \nu(x_{\theta _{n}}) &= 0 && \qquad \text{on $\partial \Omega \times (0,T)$,}
   \end{alignedat}
\end{cases}
\end{equation}
both in the sense of viscosity solutions.

We again modify the function $Z$ by defining
\[
\tilde{Z}_{\theta}(x,y;t,s):= \beta^{\lambda}(t)v^{\theta}(x,t) - \beta^{\lambda}(s)v_{\theta}(y,s)  - \varphi(x,y;t,s),
\]
where
\[
\begin{split}
  \varphi(x,y,;t,s)&:= - \omega(x,y;t,s) - \frac{\varepsilon}{2}(t-s)^{2}- \varepsilon \\& \hspace{1cm}- |x-x_{0}|^{4} - |y-y_{0}|^{4} - |t-t_{0}|^{4} - |s-s_{0}|^{4}  
\end{split}
\]
for all $(x,y;t,s) \in \tilde{K}:= \overline{\Omega} \times \overline{\Omega} \times (t_{0}-\eta,t_{0}] \times (s_{0} - \eta , s_{0}]$ and $\theta>0$, where $0 < \eta < t_{0}-s_{0}$. By the same argument as before, we can find a sequence of points $(P_{\theta_{n}})_{n \geq 1} \subseteq \tilde{K}$, such that $P_{\theta_{n}}$ minimises $\tilde{Z_{\theta _{n}}}$ over $\tilde{K}$ for each $n \geq 1$ and $P_{\theta _{n}} \to P_{0}$ in $\tilde{K}$. Applying Corollary \ref{cor:para-max-semi-min} to $\tilde{Z}_{\theta _{n}}$ at $P_{\theta _{n}}$, we find that for all $\kappa>0$, there exist $X_n,Y_n \in \sym_d$ such that
\begin{align*}
(\partial_{t}\varphi(P_{\theta _{n}}),\nabla_{x}\varphi(P_{\theta _{n}}),X_{n}) &\in \overline{\mathcal{P}}^{2,-}_{\tilde{K}}(\beta^{\lambda}v^{\theta_{n}})(x_{\theta _{n}},t_{\theta _{n}}), \\
(-\partial_{s}\varphi(P_{\theta _{n}}),-\nabla_{y}\varphi(P_{\theta _{n}}),Y_{n}) &\in \overline{\mathcal{P}}^{2,+}_{\tilde{K}}(\beta^{\lambda}v_{\theta _{n}})(y_{\theta _{n}},s_{\theta _{n}}),
\end{align*}
and also 
\begin{equation}\label{eq:matrix-ineq-bdry-dne}
 \left( \frac{1}{\kappa} + \|H_{n}\| \right)I_{2d} \geq \begin{pmatrix}
X_{n}  & 0 \\ 
0 & -Y_{n}
\end{pmatrix} \geq H_{n} - \kappa H_{n}^{2},
\end{equation}
where $H_{n}:= D^{2}\varphi(P_{\theta _{n}})$. Using the product rule and evaluating the derivatives of $\varphi$, this implies
\begin{align*}
(a^{n},\xi^{n}, \Upsilon^{n})&:= \bigg(  \frac{- \partial_{t} \omega (P_{\theta _{n}}) - 4 |t_{\theta _{n}} - t_{0}|^{2}(t_{\theta _{n}} - t_{0}) - \varepsilon (t_{\theta _{n}} - s_{\theta _{n}})}{\beta^{\lambda} (t_{\theta _{n}})} \\& \hspace{1cm}- \lambda v^{\theta _{n}}(x_{\theta _{n}},t_{\theta _{n}}) \frac{\beta'(t_{\theta _{n}})}{\beta(t_{\theta _{n}})}, \\& \hspace{1cm}-\frac{1}{\beta^{\lambda}(t_{\theta _{n}})} (\nabla_{x} \omega (P_{\theta _{n}}) + 4|x_{\theta _{n}}-x_{0}|^{2}(x_{\theta _{n}}-x_{0})), \frac{1}{\beta^{\lambda}(t_{\theta _{n}})} X_{n} \bigg) \\& \hspace{7.5cm}\in \overline{\mathcal{P}}^{2,-}_{\tilde{K}}(v^{\theta_{n}})(x_{\theta _{n}},t_{\theta _{n}}), \\
(a_{n},\xi_{n}, \Upsilon_{n})&:= \bigg(  \frac{ \partial_{s} \omega (P_{\theta _{n}}) + 4 |s_{\theta _{n}} - s_{0}|^{2}(s_{\theta _{n}} - s_{0}) - \varepsilon (t_{\theta _{n}} - s_{\theta _{n}})}{\beta^{\lambda} (s_{\theta _{n}})} \\& \hspace{1cm} - \lambda v_{\theta _{n}}(y_{\theta _{n}},s_{\theta _{n}}) \frac{\beta'(s_{\theta _{n}})}{\beta(s_{\theta _{n}})}, \\& \hspace{1cm}\frac{1}{\beta^{\lambda}(s_{\theta _{n}})} (\nabla_{y} \omega (P_{\theta _{n}}) + 4|y_{\theta _{n}}-y_{0}|^{2}(y_{\theta _{n}}-y_{0})), \frac{1}{\beta^{\lambda}(s_{\theta _{n}})} Y_{n} \bigg) \\& \hspace{7.5cm}\in \overline{\mathcal{P}}^{2,+}_{\tilde{K}}(v_{\theta_{n}})(y_{\theta _{n}},s_{\theta _{n}}).
\end{align*}
Since $v^{\theta_{n}}$ is a viscosity subsolution of \eqref{2025-11-14: v-upper-prob-dne}, we have that
\begin{equation}\label{2025-11-19: 
positive thing one}
\begin{split}
     Q_{n}&:=  a^{n} - \lambda (v^{\theta _{n}}(x_{\theta _{n}},t_{\theta _{n}}) - \theta _{n}\nabla h(x_{\theta _{n}}))|\xi^{n}-\theta _{n}\nabla h(x_{\theta _{n}})|^{p-4}\times \\&\hspace{1cm}\Big(|\xi^{n}-\theta _{n}\nabla h(x_{\theta _{n}})|^{2}\trace (\Upsilon^{n}-\theta _{n}D^2 h(x_{\theta _{n}})) \\&\hspace{1.5cm}+ (p-2)(\xi^{n}-\theta _{n}\nabla h(x_{\theta _{n}}))^{\top}(\Upsilon^{n}-\theta _{n}D^2 h(x_{\theta _{n}})) (\xi^{n}-\theta _{n}\nabla h(x_{\theta _{n}}))\Big) \\&\hspace{1cm}- |\xi^{n}-\theta _{n}\nabla h(x_{\theta _{n}})|^{p}\geq 0
\end{split}
\end{equation}
if $x_{\theta _{n}} \in \Omega$ and
\[
\max \{ Q_{n}, |\xi^{n} -\theta _{n} \nabla h(x_{\theta _{n}})|^{p-2} (\xi^{n} -\theta _{n}\nabla h(x_{\theta _{n}}))\cdot \nu(x_{\theta _{n}})  \} \geq 0
\]
if $x_{\theta _{n}} \in \partial \Omega$. By the same argument as in the proof of Theorem \ref{thm:Euclidean-Schroedinger-full-visc}, we conclude that
\[
\begin{split}
    (\xi^{n} &-\theta _{n}\nabla h(x_{\theta _{n}}))\cdot \nu(x_{\theta _{n}}) \\&= \left(-\frac{1}{\beta^{\lambda}(t_{\theta _{n}})} (\nabla_{x} \omega (P_{\theta _{n}}) + 4|x_{\theta _{n}}-x_{0}|^{2}(x_{\theta _{n}}-x_{0})) - \theta _{n} \nabla h(x_{\theta _{n}})\right)\cdot \nu(x_{\theta _{n}}) \\&< -\theta _{n} \delta <0,
\end{split}
\]
and in particular $\xi^{n} -\theta _{n}\nabla h(x_{\theta _{n}}) \ne 0$. Therefore, we have
\[
|\xi^{n} -\theta _{n} \nabla h(x_{\theta _{n}})|^{p-2} (\xi^{n} -\theta _{n}\nabla h(x_{\theta _{n}}))\cdot \nu(x_{\theta _{n}}) <0,
\]
which implies \eqref{2025-11-19: positive thing one} holds whether $x_{\theta _{n}} \in \Omega$ or $x_{\theta _{n}} \in \partial\Omega$. By the same argument, we can also show that
\begin{equation}\label{2025-11-19: 
positive thing two}
\begin{split}
    & a_{n} - \lambda (v_{\theta _{n}}(y_{\theta _{n}},s_{\theta _{n}}) - \theta _{n}\nabla h(y_{\theta _{n}}))|\xi_{n}-\theta _{n}\nabla h(y_{\theta _{n}})|^{p-4}\times \\& \hspace{1cm}\Big(|\xi _{n}-\theta _{n}\nabla h(y_{\theta _{n}})|^{2}\trace (\Upsilon_{n}-\theta _{n}D^2 h(y_{\theta _{n}})) \\& \hspace{1.5cm}+ (p-2)(\xi_{n}-\theta _{n}\nabla h(y_{\theta _{n}}))^{\top}(\Upsilon_{n}-\theta _{n}D^2 h(y_{\theta _{n}})) (\xi_{n}-\theta _{n}\nabla h(y_{\theta _{n}}))\Big) \\& \hspace{1cm}- |\xi_{n}-\theta _{n}\nabla h(y_{\theta _{n}})|^{p}\geq 0.
\end{split}
\end{equation}
Taking $n\to \infty$ in \eqref{eq:matrix-ineq-bdry-dne}, \eqref{2025-11-19: positive thing one} and \eqref{2025-11-19: positive thing two} yields that

\begin{align*}
    &\frac{- \partial_{t} \omega(P_{0})- \varepsilon(t_{0}-s_{0})}{\beta^{\lambda}(t_{0})} -\lambda v(x_{0},t_{0}) \frac{\beta'(t_{0})}{\beta(t_{0})} \\&\hspace{1cm}- \lambda v(x_{0},t_{0})\beta^{-\lambda(p-1)}(t_{0})|\nabla_{x}\omega(P_{0})|^{p-4}\times\\&\hspace{2cm}(|\nabla_{x}\omega(P_{0})|^{2}\trace X + (p-2)\nabla_{x}\omega (P_{0})^{\top}X \nabla_{x}\omega(P_{0})) \\&\hspace{1cm}- \beta^{-\lambda p}(t_{0})|\nabla_{x} \omega(P_{0})|^{p} \geq 0,\\[18pt]
&\frac{\partial_{s} \omega(P_{0})- \varepsilon(t_{0}-s_{0})}{\beta^{\lambda}(s_{0})} -\lambda v(y_{0},s_{0}) \frac{\beta'(s_{0})}{\beta(s_{0})} \\&\hspace{1cm}- \lambda v(y_{0},s_{0})\beta^{-\lambda(p-1)}(s_{0})|\nabla_{y}\omega(P_{0})|^{p-4}\times \\&\hspace{2cm}(|\nabla_{y}\omega(P_{0})|^{2}\trace Y + (p-2)\nabla_{y}\omega (P_{0})^{\top}Y \nabla_{y}\omega(P_{0})) \\& \hspace{1cm}- \beta^{-\lambda p}(s_{0})|\nabla_{y} \omega(P_{0})|^{p} \geq 0, \\[18pt]
&\left( \frac{1}{\kappa} + \|H\| \right)I_{2d} \geq \begin{pmatrix}
X  & 0 \\ 
0 & -Y
\end{pmatrix} \geq H - \kappa H^{2}\\
\end{align*}
hold as before and we arrive to a contradiction from here.

\emph{The $\lambda =0$ case}: The proof closely follows the procedure used to prove the linear case in Theorem \ref{thm:Euclidean-Schroedinger-full-visc}, repeating the modifications used in the $\lambda > 0$ to accommodate the nonlinearity.

\emph{The $\lambda < 0$ case}: The proof is carried out identically as in the case $\lambda > 0$, except one replaces the $\varepsilon(t-s)^2$ term occurring in the definition of $Z$ by \[
\frac{\varepsilon}{2} \left( \int_{s}^{t} \frac{1}{A^{2}(\tau)\beta^{\lambda}(\tau)} \dd \tau \right)^{-2}.
\]
\end{proof}

\begin{remark}
We identify the function 
\begin{equation}\label{eq:nonlinear-omega}
\omega(x,y;t,s)=\frac{1}{p'}\left( \frac{K}{d} \right) ^{p'-1} \frac{|x-y|^{p'}}{(t^{Kp/d} - s^{Kp/d})^{p'-1}}
\end{equation} 
to be an optimal candidate for the function $\omega$.
One sees that the function $\omega$ defined in \eqref{eq:nonlinear-omega} satisfies the inequalities \eqref{eq:dne-euclidean-omega-ineq-1-visc} and \eqref{eq:dne-euclidean-omega-ineq-2-visc} with equality. In addition, \eqref{eq:dne-euclidean-omega-ineq-3-visc} is also satisfied with equality once one chooses $A(t) = \frac{d}{Kp}t^{(\frac{p}{d} + \frac{\lambda(p-2)}{2})K}$ and $\beta(t)=t^K$ with $K=\frac{1}{m(p-1)-1+\frac{p}{d}}$. Our choice is further supported by the fact that the Barenblatt solution $\mathcal{B}$ for the doubly nonlinear heat equation satisfies the resulting Harnack inequality \eqref{eq:harnack-dne-lam>0-visc} with equality for all $x,y \in \R^d$ and $0<s<t$ in the support of $\mathcal{B}$ satisfying \[
s^{Kp/d}x = t^{Kp/d}y.
\]
\end{remark}

\begin{remark}
    Theorem \ref{thm:dne-viscosity} requires that the function $\beta$ is strictly increasing, which if $\beta(t) = t^K$, only occurs if $K=\frac{1}{m(p-1)-1+\frac{p}{d}}>0$. This introduces the restriction \[
    p> \frac{d(m+1)}{1+dm},
    \]
    which must be satisfied in order to deduce a Harnack inequality using Theorem \ref{thm:dne-viscosity}. In the case $m=1$ corresponding to the $p$-heat equation, this restriction reads as \[
    p> p_{*}:=\frac{2d}{d+1},
    \]
    which matches the range for which both the Harnack inequality (see, for instance, \cite{DiBenedetto-Harnack-Book}) and its differentiated form due to Esteban and Vázquez \cite{Esteban-Vasquez-1990} are known to hold. Moreover, with $\omega$ chosen as in \eqref{eq:nonlinear-omega}, we recover the Harnack inequality
    \[
    \frac{1}{\lambda}(t^{K}u(x,t))^{\lambda} - \frac{1}{\lambda}(s^{K}u(y,s))^{\lambda} + \frac{1}{p'}\left( \frac{K}{d}\right)^{p'-1} \frac{|x-y|^{p'}}{(t^{Kp/d} - s^{Kp/d})^{p'-1}} \geq 0
    \]
    satisfied by every positive solution of the $p$-heat equation for every $x,y \in \R^d$, $0<s<t$, which coincides with the result in \cite{Auchmuty-Bao-1994}.
    
    Similarly, for $p=2$, one has the restriction \[
    m> 1 - \frac{2}{d},
    \]
    which already appeared in the work of Aronson and B\'enilan \cite{Aronson-Benilan-1979} on the porous medium equation. In addition, we also obtain the Harnack inequality 
    \[
    \frac{m}{m-1}(t^{K}u(x,t))^{m-1} - \frac{m}{m-1}(s^{K}u(y,s))^{m-1} +  \frac{K}{2d} \frac{|x-y|^{2}}{t^{2K/d} - s^{2K/d}} \geq 0
    \]
     satisfied by every positive solution of the porous medium equation for every $x,y \in \R^d$, $0<s<t$ \cite{Auchmuty-Bao-1994}.
\end{remark}

\appendix
\section{Approximation argument}
In this brief appendix, we explain why it is possible to approximate weak solutions of the doubly nonlinear heat equation \eqref{eq:dne-full-space} on the full space $\R^d$ by viscosity solutions of the homogeneous Neumann problem \eqref{eq:dne-euclidean-neumann-visc} on bounded domains $\Omega \subseteq \R^d$, which is the crucial fact enabling us to obtain Theorem \ref{thm:dne-full-euclidean}. We first clarify what is meant by a weak solution of homogeneous Neumann problem for the doubly nonlinear heat equation on a bounded domain $\Omega \subseteq \R^d$.

\begin{definition}[Weak solution of \eqref{eq:dne-euclidean-neumann-visc}]
    Let $\Omega \subseteq \R^d$, $T>0$, $m>0$, and  $p>1$. We call a function $u:\Omega \times [0,T] \to [0,\infty)$ a \emph{(nonnegative) weak solution of \eqref{eq:dne-euclidean-neumann-visc}} if \[
    u \in C([0,T]; L^{m+1}(\Omega)), \quad u^m \in L^{p}(0,T;W^{1,p}(\Omega)),
    \]
    and \[
    \int_{0}^{T} \int_{\Omega} -u \partial_{t}\varphi + |\nabla u^{m}|^{p-2}\nabla u^{m}\nabla \varphi \dd x  \dd t =0 
    \]
    for all nonnegative functions \[
    \varphi \in W^{1, 1+ \frac{1}{m}}(0,T;L^{1+ \frac{1}{m}}(\Omega)) \cap L^p(0,T;W^{1,p}_0(\Omega)).
    \]
\end{definition}

By replacing $\Omega$ by $\R^d$ in the above definition, we also understand the meaning of a weak solution of \eqref{eq:dne-manifold} on the full space $\R^d$. 

In the argument to approximate a weak solution of \eqref{eq:dne-manifold} on $\R^d$ by viscosity solutions on bounded sets, one passes to the notion of mild solutions. For this, one must first reinterpret the problem \eqref{eq:dne-manifold} as an abstract Cauchy problem
\begin{equation}\label{eq:dne-cauchy}
   \begin{cases}
   \begin{alignedat}{2}
       \frac{du}{dt}(t) + A u&\ni 0 && \qquad \text{in $(0,T)$,} \\
        u(0) &= u_{0}
   \end{alignedat}
\end{cases} 
\end{equation}
governed by a suitable operator $A$. For the reader's convenience, we give the details of the construction of the operator $A$. We proceed as in \cite{collier-phd} and \cite{collier-hauer-arxiv}. We define the energy
\[
E(u):= \begin{cases}
\frac{1}{p}\int_{\Omega} |\nabla u|^{p}  \dd x  & \text{if } u \in W^{1,p}(\Omega ) \cap L^2(\Omega) \\ + \infty & \text{otherwise}
\end{cases}
\]
for each $u \in L^2(\Omega)$ and define the (negative) Neumann $p$-Laplace operator by \[
-\Delta_p u:= \partial_{L^2(\Omega)}E(u)
\]
for every $u \in D(\partial_{L^2(\Omega)}E)$. Next, we take the part of the operator $-\Delta_p$ in $(L^1(\Omega) \times L^{\infty}(\Omega))^2$, that is
\[
(-\Delta_p)_{\Omega,1 \cap \infty}:= \{ (u,v) \in (L^1(\Omega) \cap L^{\infty}(\Omega))^{2} \mid v = (-\Delta_p)_{\Omega}u \}
\]
Finally, we define the doubly nonlinear operator $A=(-\Delta_p) \circ \cdot ^m$ by
\[
A:= \left\{ (u,v) \in (L^1(\Omega))^{2} \ \left| \  \begin{aligned}\exists ((u_{k},v_{k}))_{k \geq 1} \text{ s.t. } v_{k} = (-\Delta_p)_{\Omega,1 \cap \infty}u^{m}, \\ u_{k}\to u \text{ and } v_{k} \to v \text{ in } L^1(\Omega)\end{aligned}\right.\right\}.
\]

\begin{definition}
    [Mild solution of \eqref{eq:dne-cauchy}]
    A function $u \in C([0,\infty];L^1(\Omega))$ is called a mild solution of the Cauchy problem \eqref{eq:dne-cauchy} if $u(0)=u_{0}$ in $L^1(\Omega)$, and for every $\varepsilon >0$, there is a partition $\pi_\varepsilon:0=t_0<t_1<\ldots < t_{N_{\varepsilon}}=T$ such that $\max_{1, \ldots, N_{\varepsilon}} (t_i - t_{i-1})<\varepsilon$, and there is a sequence $(u_{i, \varepsilon})_{i=0}^{N_\varepsilon} \subseteq D(A)$ defined by $u_{0,\varepsilon}=u_0$ and $u_{i,\varepsilon}$ is defined via
    \[
    u_{i,\varepsilon} + (t_{i,\varepsilon} - t_{i-1,\varepsilon})Au_{i,\varepsilon} \ni u_{i-1,\varepsilon}
    \]
    for $i=1, \ldots, N_\varepsilon - 1$, such that the sequence $(u_{\varepsilon})_{\varepsilon >0}$ of step functions \[
    u_{\varepsilon}(t):= \sum_{i=0}^{N_{\varepsilon}-1} u_{i,\varepsilon} \mathbbm{1}_{[t_{i,\varepsilon},t_{i,\varepsilon+1})}
    \]
    converges to $u$ in $C([0,\infty];L^1(\Omega))$ as $\varepsilon \to 0^{+}$.
\end{definition}

We have the following theorem.

\begin{theorem}\label{thm:approximation}
Let $u_0 \in L^1(\R^d)$ be positive and let $u$ be the unique weak solution to \eqref{eq:dne-manifold} on $\R^d$. Then there exists a sequence $(u_n)_{n \geq 1}$ of functions $u_n: \Omega_n \to (0,\infty)$ defined on an increasing sequence of bounded convex sets $\Omega_n \subseteq \R^d$ such that $u_n$ is a positive viscosity solution of the homogeneous Neumann problem \eqref{eq:dne-euclidean-neumann-visc} for all $n \geq 1$ and $u_n \to u$ pointwise on $\R^d \times (0,\infty)$.
\end{theorem}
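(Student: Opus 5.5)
The plan is to build the approximating Neumann solutions from the operator $A$ on balls $\Omega_n:=B_n(0)$, which are bounded, convex and smooth. For each $n$ we form the operator $A_n$ on $L^1(\Omega_n)$ exactly as above, with the Neumann $p$-Laplacian on $\Omega_n$. We take initial data $u_{0,n}:=u_0|_{\Omega_n}+\tfrac1n$, which is strictly positive, bounded below by $\tfrac1n$, and lies in $L^1(\Omega_n)$. The first step is to check that $A_n$ is $m$-accretive in $L^1(\Omega_n)$ with dense domain, following \cite{collier-phd,collier-hauer-arxiv}. The Crandall--Liggett theorem then gives a unique mild solution $u_n\in C([0,\infty);L^1(\Omega_n))$.

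The second step is to upgrade $u_n$ to a weak solution of \eqref{eq:dne-euclidean-neumann-visc} and then to a viscosity solution. Accretivity gives the $L^1$-contraction and comparison principle. Constants are stationary solutions, so comparison yields $u_n\ge \tfrac1n>0$. For bounded data it also gives $u_n\le \|u_{0,n}\|_\infty$; for $L^1$ data we use the $L^1$–$L^\infty$ regularising effect. Standard energy estimates for the resolvent scheme, passed to the limit, show that $u_n$ is a bounded weak solution. The result of \cite{boegelein-strunk} then makes $u_n$ a viscosity solution in the sense of Definition \ref{def:dne-viscosity}. Here one must verify that the Neumann condition is realised in the viscosity sense of the $\max/\min$ boundary clauses, which relies on smoothness of $\partial\Omega_n$ and the boundary regularity of the weak solution.

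The third step is convergence. We extend $u_{0,n}$ and $u_n$ by their values suitably, or compare on fixed compacts $K\subset\Omega_{n_0}$. Using the uniform $L^\infty_{loc}$ bounds from the regularising effect (uniform in $n$ because $\|u_{0,n}\|_{L^1(\Omega_n)}\le \|u_0\|_{L^1}+|B_n|/n$ is problematic, so one should instead take $u_{0,n}=u_0|_{\Omega_n}+\tfrac{1}{n|B_n|}$), we obtain uniform local Hölder continuity by the regularity results \cite{boegelein2023holdercontinuitygradientsolutions, strunk-dne}. Arzelà--Ascoli then gives a locally uniformly convergent subsequence. The limit is a nonnegative weak solution on $\R^d$ with initial datum $u_0$, by passing to the limit in the weak formulation with test functions supported in a fixed ball. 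Uniqueness of the weak solution on $\R^d$ identifies the limit as $u$, so the whole sequence converges pointwise.

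The main obstacle is this last identification and the uniformity. The Neumann approximations see no boundary at infinity, and mass control under the $L^1$ contraction is needed to prevent loss of mass and to pass to the limit in $C([0,T];L^1_{loc})$. I would first try to establish monotonicity in $n$ via comparison, using initial data $u_0\mathbbm{1}_{\Omega_n}$ plus small constants. That would give a monotone limit directly.
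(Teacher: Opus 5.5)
Your route differs from the paper's and is viable, but the identification of the limit is left open, and the fix you suggest for it would not work.

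\textbf{Comparison with the paper.} The paper argues with nonlinear semigroups. It takes $\Omega_n=B(0,n)$ and $u_{0,n}=u_0|_{\Omega_n}$, and follows \cite[Proposition 3.14]{collier-hauer-arxiv} to get $u_n\to\tilde u$ in $L^1$, uniformly on $[0,T]$, where $\tilde u$ is the mild solution on $\R^d$. The real work there is convergence of resolvents, i.e.\ a stationary problem on expanding balls. The regularising effect of \cite{coulhon-hauer} then makes $\tilde u$ a bounded weak solution for $t>0$, so $\tilde u=u$, and local H\"older estimates upgrade this to locally uniform convergence. You use parabolic compactness instead: uniform interior $L^\infty$ and H\"older bounds, Arzel\`a--Ascoli, passage to the limit in the weak formulation, and uniqueness.

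The paper's route gets the initial datum and the mass for free from convergence in $C([0,T];L^1)$. Yours avoids resolvent convergence and gives locally uniform convergence directly, but must recover the initial trace by hand. For the uniform bounds, interior $L^1$--$L^\infty$ estimates are safer than a global regularising effect on $\Omega_n$: they depend only on the conserved mass $\|u_0\|_{L^1}+\frac1n$, not on the Neumann--Sobolev constant of $\Omega_n$. Shifting the data by $\frac{1}{n|B_n|}$ is a genuine improvement: comparison with constants gives $u_n\ge\frac{1}{n|B_n|}>0$ at a cost of mass $\frac1n$, whereas the paper leaves positivity of $u_n$ implicit. Your viscosity step (B\"ogelein--Strunk, with the boundary clauses of Definition~\ref{def:dne-viscosity} still to be checked) is at the same level of detail as the paper.

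\textbf{What still needs closing.} First, locally uniform convergence of $u_n$ does not let you pass to the limit in $|\nabla u_n^m|^{p-2}\nabla u_n^m$. You need a uniform Caccioppoli bound for $\nabla u_n^m$ in $L^p_{loc}$, plus a Minty-type identification of the weak limit of the flux (or uniform local gradient H\"older bounds).

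Second, as you suspect, Arzel\`a--Ascoli on compacts of $\R^d\times(0,\infty)$ says nothing at $t=0$, so uniqueness cannot yet be invoked. Monotonicity in $n$ will not help. There is no comparison between Neumann problems on nested domains, because the flux of $u_{n+1}$ across $\partial\Omega_n$ has no sign. Mass of $u_0$ in $\Omega_{n+1}\setminus\Omega_n$ flows inward, whereas for radially decreasing data it flows outward; the monotone-exhaustion device only works for Dirichlet problems.

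The fix uses only accretivity. Let $S_n$ denote the semigroup of mild solutions for $A_n$, and take $\phi\ge0$ with $\phi^m\in C_c^\infty(\Omega_{n_0})$. For $n\ge n_0$ we have $\phi\in D(A_n)$ with $A_n\phi=-\Delta_p\phi^m$, independent of $n$. Hence $\|S_n(t)\phi-\phi\|_{L^1}\le t\|\Delta_p\phi^m\|_{L^1}$, and the $L^1$ contraction gives
\[
\|u_n(t)-u_{0,n}\|_{L^1(\Omega_n)}\le 2\|u_0-\phi\|_{L^1(\R^d)}+\frac{2}{n}+t\,\|\Delta_p\phi^m\|_{L^1(\R^d)}.
\]
By density of such $\phi$, this is equicontinuity at $t=0$, uniformly in $n$. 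Fatou's lemma then gives, for the limit $w$, $\|w(t)-u_0\|_{L^1(\R^d)}\to0$ as $t\to0$ and $\|w(t)\|_{L^1}\le\|u_0\|_{L^1}$. Once you also check that $w$ lies in the class where uniqueness on $\R^d$ holds, the subsequence argument finishes the proof.
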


The main idea to prove Theorem \ref{thm:approximation} is as follows. We fix a positive initial datum $u_0 \in L^1(\R^d)$ and denote by $u$ the unique weak solution of the doubly nonlinear heat equation \eqref{eq:dne-manifold} on $\R^d$. Then, setting $\Omega_n:= B(0,n)$, we note that for every $n\geq 1$, there exists a unique weak solution $u_n$ to the homogeneous Neumann problem \eqref{eq:dne-euclidean-neumann-visc} with initial datum $u_{0,n}:= u_0|_{\Omega_n}$. In particular, each of the functions $u_n$ is also a viscosity solution of \eqref{eq:dne-manifold} due to a result by Bögelein and Strunk \cite{boegelein-strunk}. 

Next, one demonstrates the convergence of $u_n$ to $u$. We omit, however, the details of the approximation argument itself and instead refer the reader to \cite[Proposition 3.14]{collier-hauer-arxiv}, where an analogous procedure is used in a nonlocal setting. 
Following these ideas, one that shows $u_n$ converges uniformly on $[0,T]$ to a mild solution $\tilde{u}$ of \eqref{eq:dne-cauchy} on $\Omega=\R^d$ for every $T>0$. However, due to regularisation effects of the operator $A$ (see \cite{coulhon-hauer}), this solution $\tilde{u}$ belongs to $L^{\infty}([\varepsilon,\infty); L^{\infty}(\R^d))$ for all $\varepsilon > 0$ and is in fact a weak solution, which we identify as $u$. Finally, using local regularity results, such as Hölder estimates (see, for instance \cite{dibenedetto-degenerate} or \cite{porzio-vespri}), one finds that $u_n \to u$ locally uniformly in $\R^d \times [0,\infty)$.

\section*{Statements and Declarations}
\textbf{Data availability statement.} As this work is mathematical in nature, there are no associated datasets and all relevant information is available in this manuscript.\\

\textbf{Funding.} The first author's research was supported by the Laureate Grant FL150100126 and Discovery Projects Grant DP250103808 of the Australian Research Council. The second author's research was supported by the Australian Research Council grant DP220100067. The third author's research was supported by an Australian Government Research Training Program (RTP) Scholarship. The authors are grateful for the support.\\

\textbf{Conflict of Interest Statement.} The authors have no conflicts of interest to declare.

\bibliographystyle{abbrvurl}

\end{document}